\begin{document}

\newcommand{\wk}{\mbox{$\,<$\hspace{-5pt}\footnotesize )$\,$}}

\numberwithin{equation}{section}
\newtheorem{teo}{Theorem}
\newtheorem{lemma}{Lemma}

\newtheorem{coro}{Corollary}
\newtheorem{prop}{Proposition}
\theoremstyle{definition}
\newtheorem{definition}{Definition}
\theoremstyle{remark}
\newtheorem{remark}{Remark}

\newtheorem{scho}{Scholium}
\newtheorem{open}{Question}
\newtheorem{example}{Example}
\numberwithin{example}{section}
\numberwithin{lemma}{section}
\numberwithin{prop}{section}
\numberwithin{teo}{section}
\numberwithin{definition}{section}
\numberwithin{coro}{section}
\numberwithin{figure}{section}
\numberwithin{remark}{section}
\numberwithin{scho}{section}

\bibliographystyle{abbrv}

\title{On Legendre curves in normed planes}
\date{}

\author{Vitor Balestro\footnote{Corresponding author} \footnote{The first named author would like to thank Prof. Marcos Craizer, who brought to his attention the study of differential geometry from the point of view of singularity theory.} \\ CEFET/RJ Campus Nova Friburgo \\ 28635000 Nova Friburgo \\ Brazil \\ vitorbalestro@id.uff.br \and Horst Martini \\ Fakult\"{a}t f\"{u}r Mathematik \\ Technische Universit\"{a}t Chemnitz \\ 09107 Chemnitz\\ Germany \\ martini@mathematik.tu-chemnitz.de \and  Ralph Teixeira \\ Instituto de Matem\'{a}tica e Estat\'{i}stica  \\ Universidade Federal Fluminense \\24210201 Niter\'{o}i\\ Brazil \\ ralph@mat.uff.br}

\maketitle

\begin{abstract} Legendre curves are smooth plane curves which may have singular points, but still have a well defined smooth normal (and corresponding tangent) vector field. Because of the existence of singular points, the usual curvature concept for regular curves cannot be straightforwardly extended to these curves. However, Fukunaga, and Takahashi defined and studied functions that play the role of curvature functions of a Legendre curve, and whose ratio extend the curvature notion in the usual sense. Going into the same direction, our paper is devoted to the extension of the concept of circular curvature from regular to Legendre curves, but additionally referring not only to the Euclidean plane. For the first time we will extend the concept of Legendre curves to normed planes. Generalizing in such a way the results of the mentioned authors, we define new functions that play the role of circular curvature of Legendre curves, and tackle questions concerning existence, uniqueness, and invariance under isometries for them. Using these functions, we study evolutes, involutes, and pedal curves of Legendre curves for normed planes, and the notion of contact between such curves is correspondingly extended, too. We also provide new ways to calculate the Maslov index of a front in terms of our new curvature functions. It becomes clear that an inner product is not necessary in developing the theory of Legendre curves. More precisely, only a fixed norm and the associated orthogonality (of Birkhoff type) are necessary.

\end{abstract}

\noindent\textbf{Keywords}: Birkhoff orthogonality, cusps, evolutes, fronts, immersions, involutes, Legendre curves, Maslov index, Minkowski geometry, normed planes, singularities

\bigskip

\noindent\textbf{MSC 2010:} 46B20, 51L10, 52A21, 53A04, 53A35

\section{Introduction}

The concept of \emph{curvature} of regular curves in the Euclidean plane can be extended to normed planes in several ways (see \cite{Ba-Ma-Sho} for an exposition of the topic,  and \cite{martiniandwu} refers, more generally, to classical curve theory in such planes). One of the curvature types obtained by these extensions, namely the \emph{circular curvature}, can be regarded as the inverse of the radius of a 2nd-order contact circle at the respective point of the curve. Therefore it turns out that the investigation of the differential geometry of these curves from the viewpoint of singularity theory is also due to this context (see \cite{izumiya} and \cite[Section 9]{Ba-Ma-Sho}). In the Euclidean subcase, the concept of curvature can be carried over to certain curves containing singular points. This was done by Fukunaga and Takahashi in \cite{Fu-Ta}, and the aim of the present paper is to investigate this framework more generally for normed planes, using the concept of circular curvature, and also extending the usual inner product orthogonality to \emph{Birkhoff orthogonality}. \\

We start with some basic definitions. A \emph{normed} (or \emph{Minkowski}) \emph{plane} $(X,\|\cdot\|)$ is a two-dimensional real vector space $X$ endowed with a norm $||\cdot|| :X\rightarrow\mathbb{R}$, whose \emph{unit ball}  is the set $B:=\{x \in X: ||x|| \leq 1\}$, namely a compact convex set centered at the origin $o$ which is an interior point of $B$. The boundary $S:=\{x\in X:||x|| = 1\}$ of $B$ is called the \emph{unit circle}, and all homothetical copies of $B$ and $S$ will be called \emph{Minkowski balls} and \emph{Minkowski circles}, respectively. We will always assume that the plane is \emph{smooth}, which means that $S$ is a smooth curve, and also \emph{strictly convex}, meaning that $S$ does not contain straight line segments. In a normed plane $(X,||\cdot||)$ we define an orthogonality relation by stating that two vectors $x,y \in X$ are \emph{Birkhoff orthogonal} (denoted by $x \dashv_B y$) whenever $||x+ty|| \geq ||x||$ for each $t \in \mathbb{R}$. Geometrically this means that if $x \dashv_By$ and $x \neq 0$, then the Minkowski circle centered at the origin which passes through $x$ is supported by a line in the direction of $y$. Useful references with respect to Minkowski geometry (i.e., the geometry of finite dimensional real Banach spaces) are \cite{thompson}, \cite{martini1}, and \cite{martini2}; for orthogonality types in Minkowski spaces we refer to \cite{alonso}.\\

One should notice that Birkhoff orthogonality is not necessarily a symmetric relation. Actually, we may endow the plane with a new associated norm which reverses the orthogonality relation. To do so, we fix a nondegenerate symplectic bilinear form $[\cdot,\cdot]:X\times X\rightarrow \mathbb{R}$ (which is unique up to rescaling) and define the associated \emph{anti-norm} to be
\begin{align*} ||x||_a = \sup\{|[x,y]|:y\in S\}, \ \ x \in X.
\end{align*}
It is easily seen that $||\cdot||_a$ is a norm on $X$, and that it reverses Birkhoff orthogonality. Moreover, the \emph{unit anti-circle} (i.e., the unit circle of the anti-norm) solves the isoperimetric problem in the original Minkowski plane (see \cite{Bus6}). The planes where Birkhoff orthogonality is symmetric are called \emph{Radon planes}, and their unit circles are called \emph{Radon curves}. In this case, we clearly have that the unit circle and the unit anti-circle are homothets, and we will always assume that the fixed symplectic bilinear form is rescaled in such way that they coincide. A comprehensive exposition on this topic is \cite{martiniantinorms}.\\

A smooth curve $\gamma:J\rightarrow X$ is said to be \emph{regular} if $\gamma'(t) \neq 0$ for every $t \in J$. If a curve is not regular, then a point, where the derivative vanishes, is called a \emph{singular point} of $\gamma$. The \emph{length} of a curve $\gamma:[a,b]\rightarrow X$ is defined as usual in terms of the norm by
\begin{align*} l(\gamma) := \sup_P\sum_{j=1}^n||\gamma(t_{j})-\gamma(t_{j-1})||,
\end{align*}
where the supremum is taken over all partitions of $P = \{a = t_0,...,t_n = b\}$ of $[a,b]$.  It is clear that we can define here the standard arc-length parametrization, and that if $s$ is an arc-length parameter in $\gamma$, then $||\gamma'(s)|| = 1$. We head now to define the \emph{circular curvature} for a regular curve $\gamma:[0,l(\gamma)]\rightarrow X$ parametrized by arc-length (for the sake of simplicity). To do so, let $\varphi(u):[0,l(S)]\rightarrow X$ be a parametrization of the unit circle by arc-length. Let $u(s):[0,l(\gamma)]\rightarrow[0,l(S)]$ be the function such that $\gamma'(s) = \frac{d\varphi}{du}(u(s))$. Then the \emph{circular curvature} of $\gamma$ at $\gamma(s)$ is defined as
\begin{align*} k(s) := u'(s).
\end{align*}
We define the \emph{left normal field} of $\gamma$ to be the unit vector field $\eta:[0,l(\gamma)]\rightarrow S$ such that $\eta(s) \dashv_B \gamma'(s)$ and $[\eta(s),\gamma'(s)] > 0$ for each $s \in [0,l(\gamma)]$. Writing $\gamma'(s) = \frac{d\varphi}{du}(u(s))$, we have that the left normal field is given by $\eta(s) = \varphi(u(s))$. Therefore, we have the Frenet-type formula
\begin{align*} \eta'(s) = u'(s)\frac{d\varphi}{du}(u(s)) = k(s)\gamma'(s).
\end{align*}
The \emph{center of curvature} of $\gamma$ at $\gamma(s)$ is the point $c(s) := \gamma(s) - k(s)^{-1}\eta(s)$, and we call the number $\rho(s) :=k(s)^{-1}$ the \emph{curvature radius} of $\gamma$ at $\gamma(s)$. The circle centered in $c(s)$ and having radius $\rho(s)$ is the \emph{osculating circle} of $\gamma$ at $\gamma(s)$. It is easily seen that this circle has 2nd-order contact with $\gamma$ at $\gamma(s)$. From the viewpoint of singularity theory, the \emph{distance squared function} of $\gamma$ to a point $p \in X$ is the function $D_p(s):=||\gamma(s)-p||^2$. We can obtain the centers of curvature of a given curve as follows.

\begin{prop} Let $\gamma:[0,l(\gamma)]\rightarrow X$ be a smooth and regular curve parametrized by arc length. Then the function $D_p(s)=||\gamma(s)-p||^2$ is such that $D_p'(s_0) = D_p''(s_0) = 0$ if and only if $p$ is the center of curvature of $\gamma$ at $\gamma(s_0)$.
\end{prop}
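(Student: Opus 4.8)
The plan is to compute the first two derivatives of $D_p$ at $s_0$ and to read off the two vanishing conditions geometrically. Write $w(s):=\gamma(s)-p$, so that $D_p(s)=\|w(s)\|^2$ and $D_p'(s)=2\|w(s)\|\cdot\mathrm{d}\|\cdot\|_{w(s)}(\gamma'(s))$ whenever $w(s)\neq 0$. Since the plane is smooth, the directional derivative $\mathrm{d}\|\cdot\|_{x}(v)$ is the derivative at $0$ of the convex function $t\mapsto\|x+tv\|$, and thus vanishes exactly when this function attains its minimum at $0$, i.e. exactly when $x\dashv_B v$. Hence $D_p'(s_0)=0$ is equivalent to $w(s_0)\dashv_B\gamma'(s_0)$. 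By smoothness and strict convexity the vectors Birkhoff orthogonal to the fixed direction $\gamma'(s_0)$ form a single line, namely $\mathbb{R}\,\eta(s_0)$; therefore $w(s_0)=\lambda\,\eta(s_0)$, that is $p=\gamma(s_0)-\lambda\,\eta(s_0)$ for some $\lambda\in\mathbb{R}$ (the case $p=\gamma(s_0)$ being immediate, as then $D_p''(s_0)=2\neq 0$). So the first derivative already pins $p$ down to the normal line, and it remains to see which $\lambda$ is selected by the second derivative.

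For this, parametrize $w(s)=r(s)\,\varphi(v(s))$ with $r=\|w\|>0$ and $\varphi(v)=w/\|w\|$, so that $D_p=r^2$ and hence $D_p''=2(r')^2+2rr''$. At the critical point $r'(s_0)=0$, and since $r(s_0)=|\lambda|\neq 0$ we get that $D_p''(s_0)=0$ is equivalent to $r''(s_0)=0$. To isolate $r''(s_0)$, I would differentiate the identity $\gamma'(s)=\varphi'(u(s))=r'\varphi(v)+rv'\varphi'(v)$ a second time, expand $\varphi''=\alpha\,\varphi+\beta\,\varphi'$ in the moving frame $\{\varphi,\varphi'\}$, and then pair everything with the symplectic form against $\varphi'(v)$ to project out the radial component. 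Using $u'(s_0)=k(s_0)$, the value $|v'(s_0)|=r(s_0)^{-1}$, and the central symmetry of $S$ (to identify the frames at $v(s_0)$ and $u(s_0)$, which are equal or antipodal according to the sign of $\lambda$), this computation gives, after dividing by $\sigma(u_0):=[\varphi(u_0),\varphi'(u_0)]>0$,
\begin{equation*} r''(s_0)=\pm\,\alpha(u_0)\bigl(k(s_0)-\lambda^{-1}\bigr).
\end{equation*}

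The main obstacle is to guarantee that the bracket can actually be isolated, i.e. that $\alpha(u_0)\neq 0$; this is exactly where strict convexity is genuinely used. From $\varphi''=\alpha\varphi+\beta\varphi'$ one has $[\varphi''(u),\varphi'(u)]=\alpha(u)\sigma(u)$, and since the positively oriented unit circle is smooth and strictly convex, its tangent direction $\varphi'(u)$ turns strictly monotonically, which forces $[\varphi'(u),\varphi''(u)]>0$ and hence $\alpha(u)<0$ for every $u$ (in the Euclidean model $\varphi''=-\varphi$, so $\alpha\equiv-1$). With $\alpha(u_0)\neq 0$ the displayed formula shows that $r''(s_0)=0$ is equivalent to $k(s_0)=\lambda^{-1}$, i.e. to $\lambda=k(s_0)^{-1}=\rho(s_0)$.

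Combining the two steps, $D_p'(s_0)=D_p''(s_0)=0$ holds if and only if $p=\gamma(s_0)-\rho(s_0)\eta(s_0)$, which is precisely the center of curvature $c(s_0)$, as claimed. I expect the genuinely delicate points to be the handling of derivatives of $\|\cdot\|^2$ without an inner product — circumvented by combining the radial/tangential decomposition $w=r\,\varphi(v)$ with symplectic projection onto the frame — and the bookkeeping of signs coming from the orientation of $p$ relative to $\eta(s_0)$, which however disappears in the final equivalence.
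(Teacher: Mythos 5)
The paper does not prove this proposition at all --- it simply cites \cite[Proposition 9.1]{Ba-Ma-Sho} --- so there is nothing to compare against line by line; your argument has to stand on its own. Its overall architecture is sound: the identification of $D_p'(s_0)=0$ with $\gamma(s_0)-p\dashv_B\gamma'(s_0)$ via the convexity of $t\mapsto\|x+tv\|$ is exactly right, the polar decomposition $w=r\,\varphi(v)$ correctly reduces the second condition to $r''(s_0)=0$, and I have checked that the symplectic projection does yield $r''(s_0)=\epsilon\,\alpha(u_0)\bigl(k(s_0)-\lambda^{-1}\bigr)$ with $\epsilon=\mathrm{sgn}(\lambda)$, including the sign bookkeeping through the antipodal map.

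The genuine gap is the step you yourself flag as the crux: the claim that $\alpha(u)<0$ for all $u$. Strict convexity in the sense this paper uses (no segments on $S$) together with smoothness makes $u\mapsto\varphi'(u)$ injective and monotone, but a strictly monotone smooth map can have vanishing derivative at isolated points, so this does \emph{not} force $[\varphi'(u),\varphi''(u)]>0$; it only gives $[\varphi'(u),\varphi''(u)]\ge 0$, i.e.\ $\alpha(u)\le 0$. Concretely, for the $\ell_4$ norm one computes near $(1,0)$ that $\varphi(u)=(1-u^4/4+O(u^8),\,u+O(u^{13}))$, so $\varphi''$ vanishes there and $\alpha(u_0)=0$. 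At such a point your formula gives $r''(s_0)=0$ for \emph{every} $\lambda\neq 0$, and indeed a direct expansion shows $D_p(s)-D_p(s_0)=O((s-s_0)^4)$ for every $p\neq\gamma(s_0)$ on the normal line: the proposition as literally stated fails for $\ell_4$. So the missing ingredient is not derivable from the stated hypotheses; one needs the additional standing assumption that $\varphi''$ never vanishes (equivalently, that $S$ has nowhere-vanishing curvature, equivalently that $\mathrm{D}b$ is everywhere invertible). This assumption is in fact implicitly required elsewhere in the paper --- the function $u(s)$ defined by $\gamma'(s)=\frac{d\varphi}{du}(u(s))$ is smooth only if $\varphi'$ is a diffeomorphism of the circle, and the function $\rho$ of Remark \ref{diffb} must be positive --- so you should state it explicitly and replace the monotonicity argument by it; with that hypothesis in place your proof is complete.
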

\begin{proof} See \cite[Proposition 9.1]{Ba-Ma-Sho}.

\end{proof}

Throughout the text, we will call the circular curvature simply \emph{curvature}, and the left normal field will be referred to as \emph{normal field}.

\section{Curvature of curves with singularities} \label{curvature}

The main objective of this paper is to extend and study the concept of curvature for curves in normed planes which have certain types of ``well-behaving'' singularities. Roughly speaking, in certain situations a curve can have a singularity, but we are still able to derive a natural tangent direction corresponding with the respective curve point. For example, let $\gamma(t):I\rightarrow X$ be a curve, and assume that $\gamma$ has a (unique, for the sake of simplicity) isolated singularity at $t_0 \in I$ (i.e., $\gamma'(t)$ does not vanish in a punctured neighborhood of $t_0$). If both limits
\begin{align*} \lim_{t\rightarrow t_0^{\pm}}\frac{\gamma'(t)}{||\gamma'(t)||}
\end{align*}
exist and are equal up to the sign, then we can naturally define a field of tangent (or normal) directions through the entire $\gamma$. This kind of singularity appears, for example, in \emph{evolutes} of regular curves (see \cite[Section 9]{Ba-Ma-Sho}).\\

In singularity theory, submanifolds with singularities but well-defined tangent spaces are usually called \emph{frontals} (cf. \cite{ishikawa1}). If the ambient space is two-dimensional, then these submanifolds are precisely the curves which have well-defined tangent field, even if they contain singularities. Such curves were studied by Fukunaga and Takahashi in \cite{Fu-Ta}, \cite{Fu-Ta2}, \cite{Fu-Ta3}, and \cite{Fu-Ta4}. Heuristically speaking, the existence of a well-defined tangent field has no relation to the metric of the plane. Therefore, we can re-obtain the definitions posed by the mentioned authors, but now regarding the usual tools and machinery of planar Minkowski geometry. \\

We define a \emph{Legendre curve} to be a smooth map $(\gamma,\eta):I\rightarrow X\times S$ such that $\eta(t) \dashv_B \gamma'(t)$ for every $t \in I$. If a Legendre curve is an \emph{immersion} (i.e., if the derivatives of $\gamma$ and $\eta$ do not vanish at the same time), then we call it a \emph{Legendre immersion}. A curve $\gamma:I\rightarrow X$ is said to be a \emph{frontal} if there exists a smooth map $\eta:I\rightarrow S$ such that $(\gamma,\eta)$ is a Legendre curve. Finally, we say that $\gamma$ is a \emph{front} if there exists a smooth map $\eta:I\rightarrow S$ such that $(\gamma,\eta)$ is a Legendre immersion. \\

Since we are dealing with smooth and strictly convex normed planes, it follows that Birkhoff orthogonality is unique on both sides. Define the map $b:X\setminus\{o\}\rightarrow S$ (where $o$ again denotes the origin of the plane) which associates to each $v \in X\setminus\{o\}$ the unique vector $b(v) \in S$ such that $v \dashv_B b(v)$ and $[v,b(v)] > 0$. A Legendre curve is defined heuristically by guaranteeing the existence of a \emph{normal field} to $\gamma$, instead of a tangent field. But now we simply use the map $b$ to define a ``tangent field''. We just have to define, for a Legendre curve $(\gamma,\eta):I\rightarrow X\times S$, the vector field $\xi(t):=b(\eta(t))$. Of course, $\xi(t)$ points in the direction of $\gamma'(t)$. Then there exists a smooth function $\alpha:I\rightarrow\mathbb{R}$ such that
\begin{align}\label{1stcurvature} \gamma'(t) = \alpha(t)\xi(t), \ \ t \in I.
\end{align}
Also, since $\eta'(t)$ supports the unit circle at $\eta(t)$, it follows that there exists a smooth function $\kappa:I\rightarrow\mathbb{R}$ such that
\begin{align}\label{2ndcurvature} \eta'(t) = \kappa(t)\xi(t), \ \ t \in I.
\end{align}

We call the pair $(\alpha,\kappa)$ the \emph{curvature} of the Legendre curve $(\gamma,\eta)$ with respect to the parameter $t$. 
This terminology makes sense since it is easy to see that the curvature of a Legendre curve depends on its parametrization. To justify why this pair of functions represents an a\-na\-lo\-gous concept of curvature for Legendre curves, we will show that it yields the usual (circular) curvature of a regular curve.

\begin{lemma}\label{relcurv} Let $\gamma:I\rightarrow X$ be a regular curve in a normed plane. Clearly, if $\eta:I\rightarrow S$ is its normal vector field, then $(\gamma,\eta)$ is a Legendre curve. Therefore, its circular curvature $k:I\rightarrow\mathbb{R}$ is given by
\begin{align*} k(t) = \frac{\kappa(t)}{\alpha(t)},
\end{align*}
where $\kappa$ and $\alpha$ are defined as above.
\end{lemma}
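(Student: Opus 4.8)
The plan is to reduce the stated identity to the Frenet-type formula for regular curves established in the introduction, by matching the general-parameter curvature pair $(\alpha,\kappa)$ against the arc-length data $(\varphi,u,k)$. That $(\gamma,\eta)$ is a Legendre curve is immediate: by definition the (left) normal field satisfies $\eta(t)\dashv_B\gamma'(t)$, which is exactly the Legendre condition.

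First I would pin down the tangent field $\xi(t)=b(\eta(t))$. Since the plane is smooth and strictly convex, Birkhoff orthogonality is unique on the right, so the relation $\eta(t)\dashv_B\gamma'(t)$ together with the defining orientation $[\eta(t),\gamma'(t)]>0$ of the left normal field forces $b(\eta(t))$ to be the normalized tangent, i.e. $\xi(t)=\gamma'(t)/\|\gamma'(t)\|$. Comparing this with (\ref{1stcurvature}) then gives $\alpha(t)=\|\gamma'(t)\|=\frac{ds}{dt}$, where $s$ denotes the arc length of $\gamma$.

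Next I would compute $\eta'(t)$. Writing $\eta(t)=\varphi(u(s(t)))$ as in the introduction and applying the chain rule yields
\begin{align*}
\eta'(t)=\frac{d\varphi}{du}(u(s(t)))\,u'(s(t))\,\frac{ds}{dt}.
\end{align*}
Using the identity $\frac{d\varphi}{du}(u(s))=\gamma'(s)=\xi$, the curvature definition $u'(s)=k$ (the circular curvature at the point $\gamma(t)$), and $\frac{ds}{dt}=\alpha(t)$, this becomes $\eta'(t)=k\,\alpha(t)\,\xi(t)$. Comparing with (\ref{2ndcurvature}) identifies $\kappa(t)=k\,\alpha(t)$, whence $k=\kappa(t)/\alpha(t)$, as claimed.

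I expect the main obstacle to be the careful identification of $\xi$ with the normalized tangent, that is, checking that the uniqueness and orientation conventions of Birkhoff orthogonality make $b(\eta(t))$ and $\gamma'(t)/\|\gamma'(t)\|$ coincide rather than merely be parallel up to sign, together with the reparametrization bookkeeping in the chain rule; conceptually the content is simply that the two Frenet-type formulas ($\eta'=k\gamma'$ in arc length versus $\eta'=\kappa\xi$ in the parameter $t$) must agree. It is worth noting that both $\alpha$ and $\kappa$ scale by the same factor $dt/d\tilde{t}$ under a reparametrization, so their ratio is parametrization-invariant; one may therefore alternatively verify the claim directly in the arc-length parametrization, where $\alpha\equiv 1$ and the Frenet formula $\eta'(s)=k(s)\gamma'(s)$ gives $\kappa=k$ at once.
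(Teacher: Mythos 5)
Your proposal is correct and follows essentially the same route as the paper's proof: both reduce the claim to the Frenet-type formula $\eta' = k\gamma'$ via the chain rule through the arc-length parameter, the only cosmetic difference being that you explicitly identify $\alpha(t)=\|\gamma'(t)\|$ and $\xi(t)=\gamma'(t)/\|\gamma'(t)\|$ before comparing with (\ref{2ndcurvature}), whereas the paper divides $\eta'(t)=\kappa(t)\xi(t)$ by $\alpha(t)$ directly. Your closing remark on the parametrization invariance of the ratio $\kappa/\alpha$ is also sound.
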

\begin{proof} For the Legendre curve $(\gamma,\eta)$ we have the equalities (\ref{1stcurvature}) and (\ref{2ndcurvature}). Notice that, since $\gamma$ is regular, the function $\alpha$ does not vanish. Hence we may write
\begin{align*} \eta'(t) = \frac{\kappa(t)}{\alpha(t)}\gamma'(t).
\end{align*}
On the other hand, let $s$ be an arc-length parameter in $\gamma$ and, as usual, let $\varphi(u)$ be an arc-length parametrization of the unit circle. We denote the derivative with respect to $s$ by a superscribed dot, and write
\begin{align*} \dot{t}(s)\gamma'(t) = \dot{\gamma}(s) = \frac{d\varphi}{du}(u(s)),
\end{align*}
where $u(s)$ is as in the definition of circular curvature. We have that $k(s) = \dot{u}(s)$ and $\eta(s) = \varphi(u(s))$. Differentiating this last equality, we get
\begin{align*} \dot{t}(s)\eta'(t) = \dot{u}(s)\frac{d\varphi}{du}(u(s)) = \dot{u}(s)\dot{\gamma}(s) = \dot{u}(s)\dot{t}(s)\gamma'(t),
\end{align*}
and since $\dot{t}(s)$ does not vanish, it follows that $\eta'(t) = \dot{u}(s)\gamma'(t) = k(t)\gamma'(t)$. This gives the desired equality.

\end{proof}

\begin{remark}\label{diffb} When working in the Euclidean plane, one gets a second Frenet-type formula by differentiating the field $\xi(t)$, and the same curvature function $\kappa(t)$ is obtained (see \cite{Fu-Ta}). This is not the case here. The first problem that appears is that the derivative of $\xi(t)$ does not necessarily point in the direction of $\eta(t)$. We can overcome this problem by restricting ourselves to Radon planes. However, even in this small class of norms we do not re-obtain the same curvature function. Indeed, since $\xi(t) = b(\eta(t))$, we have
\begin{align*} \xi'(t) = \mathrm{D}b_{\eta(t)}(\eta'(t)) = \kappa(t)\mathrm{D}b_{\eta(t)}(\xi(t)),
\end{align*}
where $\mathrm{D}b$ denotes the usual differential of the map $b:X\setminus\{o\}\rightarrow S$, which is no longer a (linear) rotation. It turns out that, since the considered plane is Radon, the vector $\mathrm{D}b_{\eta(t)}(\xi(t))$ is a positive multiple of the vector $-\eta(t)$, but it is not necessarily unit. If we define the map $\rho:S\rightarrow \mathbb{R}$ by $\rho(v) = ||\mathrm{D}b_{v}(b(v))||$, then we may write
\begin{align}\label{3rdcurvature} \xi'(t) = -\kappa(t)\rho(\eta(t))\eta(t).
\end{align}
The function $\rho$ is constant, however, if and only if the plane is Euclidean (see \cite{vitoremad} for a proof). If we return to the general case, we clearly have
\begin{align}\label{xideriv} \xi'(t) = -\kappa(t)\rho(\eta(t))b(\xi(t)),
\end{align}
and this equality will be used in Section \ref{evoinvo}, where the function $\rho$ will appear in the curvature pair of the evolute of a front. 
\end{remark}

At this point, we have seen that we can extend the definitions, which are common for the Euclidean subcase, in a way that everything still makes sense and has analogous behavior. However, a simple question arises: if a certain fixed curve in the plane is a Legendre curve (immersion) with respect to a fixed norm, is it then necessarily a Legendre curve (immersion) with respect to any other (smooth and strictly convex) norm? The answer is positive, and we can briefly explain the argument. Let $S_1$ and $S_2$ be unit circles with respect to two different norms, and denote the respective Birkhoff orthogonality relatons by $\dashv_B^1$ and $\dashv_B^2$. Consider the map $T:S_1\rightarrow S_2$ which associates each $v \in S^1$ to the unique $T(v) \in S_2$ such that $T(v) \dashv_B^2 b_1(v)$ and $[T(v),b_1(v)] > 0$, where $b_1$ is the usual map $b$ of the geometry given by $S_1$. The map $T$ is clearly smooth, and if $(\gamma,\eta)$ is a Legendre curve (immersion) with respect to the norm of $S_1$, then $(\gamma,T(\eta))$ is a Legendre curve (immersion) with respect to the norm of $S_2$. The details are left to the reader. 

\section{Existence, uniqueness, and invariance under isometries}

This section is concerned with natural questions regarding the generalized objects that we have defined. We start by asking whether or not there exists a corresponding Legendre curve whose curvature is given by certain fixed smooth functions $\kappa,\alpha:I\rightarrow\mathbb{R}$. For simplicity, throughout this section we assume that $I = [0,c]$.

\begin{teo}[Existence theorem] Let $(\alpha,\kappa):I\rightarrow\mathbb{R}^2$ be a smooth function. Then there exists a Legendre curve $(\gamma,\eta):I\rightarrow X\times S$ whose curvature is $(\alpha,\kappa)$.
\end{teo}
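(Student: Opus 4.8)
The plan is to construct the desired Legendre curve explicitly by integration, essentially inverting the Frenet-type relations (\ref{1stcurvature}) and (\ref{2ndcurvature}). The guiding observation is that the field $\xi$ is completely determined by $\eta$ through $\xi = b\circ\eta$, so that it suffices to first produce a suitable curve $\eta$ on the unit circle and then recover $\gamma$ by one further integration. Since $\eta$ must take values in $S$, the natural move is to prescribe it as a reparametrization of a fixed arc-length parametrization of $S$, with the reparametrization dictated by $\kappa$.

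First I would fix an arc-length parametrization $\varphi:\mathbb{R}\rightarrow S$ of the unit circle (extended periodically), oriented so that $[\varphi(u),\frac{d\varphi}{du}(u)]>0$ for every $u$. The key auxiliary claim is that $b(\varphi(u)) = \frac{d\varphi}{du}(u)$, i.e.\ that the restriction of $b$ to $S$ is exactly the positively oriented unit tangent field of $S$. Indeed, $\frac{d\varphi}{du}(u)$ is a unit vector because $\varphi$ is parametrized by arc length; the tangent line to $S$ at $\varphi(u)$ supports the unit circle, which is precisely the geometric content of $\varphi(u)\dashv_B\frac{d\varphi}{du}(u)$; and the chosen orientation matches the sign condition $[\varphi(u),\frac{d\varphi}{du}(u)]>0$ in the definition of $b$. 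Because the plane is smooth and strictly convex, Birkhoff orthogonality is unique on both sides, so these three conditions pin down the value of $b(\varphi(u))$ uniquely, yielding the claim.

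Next I would define $\eta$ and $\gamma$ by integration. Setting $u(t):=\int_0^t\kappa(\tau)\,d\tau$ and $\eta(t):=\varphi(u(t))$ gives a smooth map into $S$, and by the chain rule together with the auxiliary claim we obtain $\eta'(t)=u'(t)\frac{d\varphi}{du}(u(t))=\kappa(t)\,\xi(t)$, where $\xi(t)=b(\eta(t))=\frac{d\varphi}{du}(u(t))$; this is exactly (\ref{2ndcurvature}). Then I would define $\gamma(t):=\int_0^t\alpha(\tau)\,\xi(\tau)\,d\tau$, so that $\gamma'(t)=\alpha(t)\,\xi(t)$, which is (\ref{1stcurvature}). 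Both maps are smooth, since $\varphi$, $\frac{d\varphi}{du}$, $\alpha$, and $\kappa$ are smooth and integration preserves smoothness. (The arbitrary initial point $\gamma(0)$ and initial phase $u(0)$ account for the expected nonuniqueness, which is taken up in the uniqueness theorem.)

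Finally I would verify that $(\gamma,\eta)$ is genuinely a Legendre curve, i.e.\ that $\eta(t)\dashv_B\gamma'(t)$ for every $t$. Where $\alpha(t)\neq 0$ this is immediate from $\eta(t)\dashv_B\xi(t)$ and the invariance of Birkhoff orthogonality under nonzero rescaling of the second argument; where $\alpha(t)=0$ the relation $\eta(t)\dashv_B o$ holds trivially. By construction the curvature pair of $(\gamma,\eta)$ with respect to $t$ is then $(\alpha,\kappa)$. I do not expect any serious obstacle: the construction is a direct integration, and the only point demanding care is the identification $b\circ\varphi=\frac{d\varphi}{du}$, which nonetheless follows cleanly from the definition of $b$ and the uniqueness of Birkhoff orthogonality.
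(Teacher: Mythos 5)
Your construction is exactly the paper's: define $u(t)=\int_0^t\kappa$, set $\eta=\varphi\circ u$ and $\gamma(t)=\int_0^t\alpha\, (b\circ\eta)$, and verify the two Frenet-type relations by differentiation. You merely make explicit the identification $b\circ\varphi=\frac{d\varphi}{du}$, which the paper uses implicitly when it invokes the arc-length parametrization of $S$; the argument is correct.
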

\begin{proof} First, define the function $u:I\rightarrow\mathbb{R}$ by
\begin{align*} u(t) = \int_0^t\kappa(s)ds, \ \ t \in I.
\end{align*}
Now, define $\eta:I\rightarrow S$ by $\eta(t) = \varphi(u(t))$, and $\gamma:I\rightarrow X$ by
\begin{align*} \gamma(t) = \int_0^t\alpha(s)b(\eta(s))ds, \ \ t \in I.
\end{align*}
We claim that the pair $(\gamma,\eta)$ is a Legendre curve with curvature $(\alpha,\kappa)$. To verify this, we derivate $\gamma$ to obtain $\gamma'(t) = \alpha(t)b(\eta(t))$. Notice that $\eta(t) \dashv_B \gamma'(t)$. Therefore, in view of the previous notation, we indeed have $\xi(t) = b(\eta(t))$, and consequently equality (\ref{1stcurvature}) holds. Now, differentiating $\eta$ yields
\begin{align*} \eta'(t) = u'(t)\frac{d\varphi}{du}(u(t)) = \kappa(t)\xi(t),
\end{align*}
since $\varphi(u)$ is an arc-length parametrization of the unit circle.

\end{proof}

Of course, the next natural question is whether or not such a Legendre curve is uniquely determined if we fix initial conditions $\gamma(0) \in X$ and $\eta(0) \in S$. We give now a positive answer to this question using the standard theory of ordinary differential equations.

\begin{teo}[Uniqueness theorem] Let $(\alpha,\kappa):I\rightarrow\mathbb{R}^2$ be a smooth function and fix $(p,v) \in X\times S$. Then there exists a unique Legendre curve $(\gamma,\eta):I\rightarrow X\times S$ whose curvature is $(\alpha,\kappa)$ and such that $\gamma(0) = p$ and $\eta(0) = v$.
\end{teo}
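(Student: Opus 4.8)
The plan is to reduce the problem to two successive, essentially scalar, applications of the standard existence-and-uniqueness theorem for ordinary differential equations, exploiting the fact that the two equations (\ref{1stcurvature}) and (\ref{2ndcurvature}) defining the curvature pair already constitute a first-order system for the unknowns $\eta$ and $\gamma$. The crucial observation is that $\eta$ decouples from $\gamma$: equation (\ref{2ndcurvature}) involves $\eta$ alone, and once $\eta$ is known, equation (\ref{1stcurvature}) recovers $\gamma$ by a single integration.

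First I would handle $\eta$. Since $\eta(t)\in S$ for all $t$ and $\varphi$ is an arc-length parametrization of $S$, I would lift $\eta$ through $\varphi$, writing $\eta(t)=\varphi(u(t))$ for a smooth real function $u$ with $\varphi(u(0))=v$; such a lift exists and is unique once a preimage $u_0\in\varphi^{-1}(v)$ is fixed, because $\varphi\colon\mathbb{R}\to S$ is a covering map. Differentiating gives $\eta'(t)=u'(t)\,\frac{d\varphi}{du}(u(t))$. The key geometric fact I would invoke is that $b(\varphi(u))=\frac{d\varphi}{du}(u)$: indeed the arc-length tangent is a unit vector spanning the supporting line of $S$ at $\varphi(u)$, hence $\varphi(u)\dashv_B\frac{d\varphi}{du}(u)$, and the orientation of $\varphi$ makes $[\varphi(u),\frac{d\varphi}{du}(u)]>0$, so it is precisely the vector selected by $b$. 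Thus $\xi(t)=b(\eta(t))=\frac{d\varphi}{du}(u(t))$, and comparing with (\ref{2ndcurvature}) reduces the whole equation for $\eta$ to the scalar initial value problem $u'(t)=\kappa(t)$, $u(0)=u_0$. This has the unique solution $u(t)=u_0+\int_0^t\kappa(s)\,ds$, so $\eta(t)=\varphi\!\left(u_0+\int_0^t\kappa(s)\,ds\right)$ is uniquely determined; distinct choices of $u_0\in\varphi^{-1}(v)$ differ by a period of $\varphi$ and hence yield the very same $\eta$.

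With $\eta$ fixed, the field $\xi=b\circ\eta$ is determined, and equation (\ref{1stcurvature}) together with $\gamma(0)=p$ becomes $\gamma(t)=p+\int_0^t\alpha(s)\,b(\eta(s))\,ds$, which determines $\gamma$ uniquely. This simultaneously reproves existence (the formulas are exactly those of the Existence theorem, now with the initial conditions built in) and establishes uniqueness: any two Legendre curves with curvature $(\alpha,\kappa)$ and the prescribed initial data can be lifted with a common base point $u_0$, whence their lifts coincide by uniqueness of the scalar problem, forcing the same $\eta$ and then the same $\gamma$. Because $S$ is compact and $\alpha,\kappa$ are continuous on the compact interval $I=[0,c]$, no solution escapes to infinity, so everything is defined on all of $I$.

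I expect the main obstacle to lie not in the integration steps, which are routine, but in correctly justifying the identity $b(\varphi(u))=\frac{d\varphi}{du}(u)$ and the lift of $\eta$: one must argue that Birkhoff orthogonality of a boundary point to its arc-length tangent is exactly the supporting-line condition, fix the orientation of the symplectic form consistently with that of $\varphi$, and invoke the covering-space property of $\varphi$ to obtain a unique smooth lift $u$. Once these points are settled, the decoupling of $\eta$ from $\gamma$ turns an a priori coupled system on $X\times S$ into two elementary scalar integrations, and uniqueness is immediate.
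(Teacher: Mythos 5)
Your proposal is correct and follows essentially the same route as the paper's proof: both reduce the equation for $\eta$ to the scalar initial value problem $u'=\kappa$, $u(0)=u_0$ via the lift $\eta=\varphi\circ u$ and the identity $b(\varphi(u))=\frac{d\varphi}{du}(u)$, and then recover $\gamma$ uniquely by integrating $\gamma'=\alpha\,b(\eta)$ with $\gamma(0)=p$. You merely spell out more explicitly the covering-space justification of the lift and the independence of the choice of $u_0$, which the paper leaves implicit.
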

\begin{proof} From the construction in the previous theorem, it is clear that to determine a vector field $\eta:I\rightarrow S$ such that $\eta'(t) = \kappa(t)b(\eta(t))$ with initial condition $\eta(0) = v$ is equivalent to finding a function $u:I\rightarrow\mathbb{R}$ that solves the initial value problem
\begin{align*} \left\{\begin{array}{ll} u'(t) = \kappa(t), \ \ t \in I \\ u(0) = u_0 \end{array}\right.,
\end{align*}
where $u_0 \in \mathbb{R}$ is such that $v = \varphi(u_0)$. Uniqueness of such a function is guaranteed by the standard theory of ordinary differential equations (see, for instance, \cite{coddington}). \\

Now the tangent vector field $\gamma'(t) = \alpha(t)\xi(t)$ is completely determined (where $\xi(t) = b(\eta(t))$, as usual). Since it is clear that smooth curves with the same tangent vector field must be equal up to translation, the proof is complete.

\end{proof}

As a consequence of the uniqueness theorem, we have a characterization of the Minkowski circle. See \cite[Proposition 2.12]{Fu-Ta3} for the Euclidean version of this characterization.

\begin{prop} A Legendre curve $(\gamma,\eta):I\rightarrow X\times S$ is contained in a Minkowski circle if and only if there exists a constant $c \in \mathbb{R}$ such that $\alpha(t) = c\kappa(t)$ for all $t \in I$.
\end{prop}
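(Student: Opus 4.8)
The plan is to prove both implications by relating the position vector $\gamma(t)-p$ (measured from the prospective center $p$ of the circle) to the normal field $\eta(t)$, using the Frenet-type equations (\ref{1stcurvature}) and (\ref{2ndcurvature}).

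For the ``if'' direction I would start from the hypothesis $\alpha(t)=c\kappa(t)$ and simply combine the two structure equations. Since $\gamma'(t)=\alpha(t)\xi(t)=c\kappa(t)\xi(t)=c\eta'(t)$, the map $t\mapsto\gamma(t)-c\eta(t)$ has vanishing derivative and is therefore a constant vector, say $p\in X$. Then $\gamma(t)=p+c\eta(t)$, and since $\eta(t)\in S$ we get $\|\gamma(t)-p\|=|c|$ for all $t$; that is, $\gamma$ is contained in the Minkowski circle of radius $|c|$ centered at $p$ (with $c=0$ giving the degenerate circle reduced to the single point $p$). This direction is a one-line computation once the equations are in place.

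For the ``only if'' direction, suppose $\gamma$ lies in the Minkowski circle $\{x:\|x-p\|=r\}$ with $r>0$, and set $\sigma(t):=(\gamma(t)-p)/r$, a smooth curve in $S$. Because $\sigma$ takes values in the unit circle, its velocity $\sigma'(t)$ is tangent to $S$ at $\sigma(t)$, which by the very definition of Birkhoff orthogonality means $\sigma(t)\dashv_B\sigma'(t)$; since $\gamma'(t)=r\sigma'(t)$, this yields $\sigma(t)\dashv_B\gamma'(t)$ for every $t$ (trivially so wherever $\gamma'$ vanishes). The Legendre condition simultaneously gives $\eta(t)\dashv_B\gamma'(t)$. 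The key step is then to invoke that, in a smooth and strictly convex plane, left Birkhoff orthogonality to a fixed nonzero vector is unique up to sign: at every regular point I conclude $\sigma(t)=\pm\eta(t)$, and a continuity argument over the connected interval $I$ fixes one global sign, producing $\gamma(t)-p=\pm r\,\eta(t)$. Writing $c:=\pm r$ and differentiating gives $\gamma'(t)=c\eta'(t)=c\kappa(t)\xi(t)$, and comparing with $\gamma'(t)=\alpha(t)\xi(t)$, using that $\xi(t)\neq 0$, delivers $\alpha(t)=c\kappa(t)$.

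The main obstacle I anticipate is precisely this last identification $\gamma-p=c\eta$ with a single constant $c$. The pointwise relation $\sigma=\pm\eta$ follows cleanly from uniqueness of Birkhoff orthogonality, but promoting it to a globally constant sign requires care at singular points, where $\gamma'=0$ and the orthogonality condition becomes vacuous. I would handle this by noting that $\sigma$ and $\eta$ are continuous maps into $S$, so the locally constant sign extends across isolated singular points, and by treating separately the degenerate situation in which $\gamma$ is constant on a subinterval (where $\alpha$ vanishes identically and the claim is recovered with the appropriate value of $c$). Everything else---the two differentiations and the comparison of coefficients of $\xi$---is entirely routine.
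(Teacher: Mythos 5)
Your argument is correct in substance but follows a genuinely different route from the paper's, which settles both directions by citation: the ``only if'' part via the fact that a Minkowski circle has constant circular curvature together with Lemma \ref{relcurv}, and the ``if'' part via the uniqueness theorem. You instead work directly from the structure equations (\ref{1stcurvature}) and (\ref{2ndcurvature}). Your ``if'' direction --- integrating $\gamma'=c\eta'$ to get $\gamma=p+c\eta$ --- is a quantitative replacement for the uniqueness theorem that exhibits the center and radius explicitly. Your ``only if'' direction replaces the constant-curvature fact by the observation that $\sigma(t)=(\gamma(t)-p)/r$ is itself a unit vector Birkhoff orthogonal to $\gamma'(t)$, and then uses left uniqueness of Birkhoff orthogonality (from strict convexity) to force $\sigma=\pm\eta$ at regular points. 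This is more self-contained, and it even repairs a small weakness of the paper's proof: Lemma \ref{relcurv} is stated only for regular curves, so it does not literally apply at singular points of $\gamma$, whereas your continuity argument covers them. Indeed, once the singular set has empty interior your sign argument is airtight: $t\mapsto\|\sigma(t)-\eta(t)\|$ is continuous on the connected interval $I$ and takes values in the discrete set $\{0,2\}$ on a dense set, hence everywhere, hence is constant.

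The one place your proof does not close --- and, in fairness, neither does the paper's --- is the degenerate case where $\gamma$ is constant on a subinterval $J$. There the Legendre condition is vacuous, $\eta$ may rotate freely on $J$, and the pointwise identity $\sigma=\pm\eta$ gives no information; your parenthetical fix (``the claim is recovered with the appropriate value of $c$'') does not work, since on $J$ one has $\alpha\equiv 0$ while $\kappa$ need not vanish, which would force $c=0$ even though $c=\pm r\neq 0$ is needed on the adjacent regular arcs. In fact, a Legendre curve that traverses an arc of a circle, halts while $\eta$ rotates to $-\eta$, and then resumes satisfies $\alpha=r\kappa$ on one regular piece and $\alpha=-r\kappa$ on the other, so no single constant works; this is a defect of the statement as written (it needs, e.g., the hypothesis that the singular set of $\gamma$ has empty interior) rather than of your argument. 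Apart from noting that your proposed patch for this case is not actually a patch, I have no objection.
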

\begin{proof} If $\gamma$ is contained in a Minkowski circle of radius $c$, then the circular curvature equals $1/c$ (cf. \cite[Theorem 6.1]{Ba-Ma-Sho}). Therefore, from Lemma \ref{relcurv} it follows that $\alpha(t) = c\kappa(t)$ for every $t \in I$. The converse follows immediately from the uniqueness theorem.

\end{proof}

Let $(\gamma,\eta):I\rightarrow X\times S$ be a Legendre curve with curvature $(\alpha,\kappa)$, and let $T:X\rightarrow X$ be an \emph{isometry} of the plane, i.e., a norm-preserving map. An isometry is called \emph{orientation preserving} if the sign of the fixed determinant form remains invariant under its action. Since Birkhoff orthogonality is defined in terms of distances, it is clear that $(T\gamma,T\eta)$ is still a Legendre curve, and hence it has a curvature function $(\kappa_T,\alpha_T)$.

\begin{teo}[Invariance under isometries] The curvature of a Legendre curve is invariant under an orientation preserving isometry of the plane.
\end{teo}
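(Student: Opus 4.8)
The plan is to reduce the statement to the linear part of $T$ and then simply track how the defining relations (\ref{1stcurvature}) and (\ref{2ndcurvature}) transform. First I would invoke the Mazur--Ulam theorem to write $T(x) = L(x) + x_0$, where $L$ is a linear norm-preserving map and $x_0 \in X$ is a fixed translation vector (note $L$ preserves the norm since $\|Lx\| = \|Lx - L o\| = \|x\|$). The curvature pair $(\alpha,\kappa)$ is defined purely through the derivatives $\gamma'$ and $\eta'$ and the associated tangent field $\xi = b(\eta)$, and the normal field $\eta$ is a field of unit vectors, hence transforms only by the linear part $L$. Consequently the translation $x_0$ plays no role, and it suffices to treat the linear isometry $L$, the transformed Legendre pair being $(L\gamma, L\eta)$; here $L\eta$ indeed takes values in $S$ precisely because $L$ preserves the norm.

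The heart of the argument will be to show that $b$ commutes with $L$, i.e., that $b(L\eta(t)) = L\xi(t)$, so that $L$ carries the tangent field of the original curve to that of the transformed one. Two properties of $L$ enter. The first is that $L$ preserves Birkhoff orthogonality: since $\|Lx + tLy\| = \|L(x + ty)\| = \|x + ty\| \geq \|x\| = \|Lx\|$ for all $t \in \mathbb{R}$, we have $x \dashv_B y \Rightarrow Lx \dashv_B Ly$; applying this to $\eta(t) \dashv_B \xi(t)$ gives $L\eta(t) \dashv_B L\xi(t)$, which already shows that $(L\gamma, L\eta)$ is again a Legendre curve. The second is that, by the definition of an orientation-preserving isometry, the sign of the fixed form $[\cdot,\cdot]$ is invariant under $L$; hence $[L\eta(t), L\xi(t)]$ has the same sign as $[\eta(t),\xi(t)]$ and is therefore positive. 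Since $L\xi(t) \in S$, these two facts together with the uniqueness of Birkhoff orthogonality on both sides (guaranteed by smoothness and strict convexity) identify $L\xi(t)$ as exactly the vector $b(L\eta(t))$.

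With $b(L\eta) = L\xi$ established, the conclusion is a one-line computation. Differentiating and using linearity of $L$ together with (\ref{1stcurvature}) and (\ref{2ndcurvature}), I obtain $(L\gamma)'(t) = L\gamma'(t) = \alpha(t)\,L\xi(t) = \alpha(t)\, b(L\eta(t))$ and $(L\eta)'(t) = L\eta'(t) = \kappa(t)\,L\xi(t) = \kappa(t)\, b(L\eta(t))$. Reading off the curvature of $(L\gamma, L\eta)$ directly from these two equations then yields $\alpha_T = \alpha$ and $\kappa_T = \kappa$, which is the assertion.

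The main obstacle is exactly the identity $b(L\eta) = L\xi$, and this is where the orientation-preserving hypothesis is indispensable. The relation $L\eta \dashv_B L\xi$ alone only determines $L\xi$ up to sign, because $L\eta \dashv_B L\xi$ forces $L\eta \dashv_B -L\xi$ as well; the sign is pinned down solely by the positivity of $[L\eta, L\xi]$, which requires $[\cdot,\cdot]$ to keep its sign under $L$. For an orientation-reversing isometry one would instead get $b(L\eta) = -L\xi$, and the two displayed equations would then produce curvature $(-\alpha, -\kappa)$ rather than $(\alpha,\kappa)$. Everything else in the proof is routine, the reduction to the linear part and the preservation of Birkhoff orthogonality being immediate consequences of norm-invariance.
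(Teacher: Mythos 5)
Your proposal is correct and follows essentially the same route as the paper: reduce to the linear part of the isometry, show $T\xi(t) = b(T\eta(t))$ using preservation of Birkhoff orthogonality, of the norm (so $T\xi(t)\in S$), and of orientation, and then read off the curvature pair from the transformed Frenet-type equations. You merely spell out details the paper leaves implicit (Mazur--Ulam and the verification that Birkhoff orthogonality is preserved).
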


\begin{proof} In the same notation as above, we have to prove that $\kappa = \kappa_T$ and $\alpha = \alpha_T$. Recall that an isometry of a normed plane must be linear up to translation, and then we may consider it as linear, for the sake of simplicity (cf. \cite{Ba-Ma-Sho}). Hence, from (\ref{1stcurvature}) and (\ref{2ndcurvature}) we have the equalities
\begin{align*}(T\gamma)'(t) = T\gamma'(t) = \alpha(t)T\xi(t) \ \ \mathrm{and} \\ (T\eta)'(t) = T\eta'(t) = \kappa(t)T\xi(t).
\end{align*}
Therefore, in order to prove that $\kappa = \kappa_T$ and $\alpha = \alpha_T$ it suffices to show that $T\xi(t) = b(T\eta(t))$, where we recall that $\xi(t) = b(\eta(t))$. But this comes immediately, since $T\eta(t) \dashv_B Tb(\eta(t))$, $||Tb(\eta(t))|| = ||b(\eta(t))|| = 1$, and $T$ is orientation preserving.

\end{proof}

\begin{remark} Clearly, if the considered isometry is orientation reversing, then we have $\kappa_T = -\kappa$ and $\alpha_T = -\alpha$.
\end{remark}

\section{Ordinary cusps of closed fronts}

A singularity $t_0 \in I$ of a smooth curve $\gamma:I\rightarrow X$ is said to be an \emph{ordinary cusp} if $\gamma''(t_0)$ and $\gamma'''(t_0)$ are linearly independent vectors. Our next statement shows that we can describe an ordinary cusp of a front in terms of the curvature functions of an associated Legendre immersion.

\begin{lemma}\label{ordcusp} Let $\gamma:I\rightarrow X$ be a front, and let $\eta:I\rightarrow S$ be a smooth vector field such that $(\gamma,\eta)$ is a Legendre immersion. A point $t_0 \in I$ is an ordinary cusp if and only if $\alpha'(t_0) \neq 0$, where $\alpha:I\rightarrow\mathbb{R}$ is defined as in \emph{(}\ref{1stcurvature}\emph{)}.
\end{lemma}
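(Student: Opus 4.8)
The plan is to translate the linear-independence condition on $\gamma''(t_0)$ and $\gamma'''(t_0)$ into a condition on $\alpha$ by differentiating the structure equation (\ref{1stcurvature}) and substituting the Frenet-type relations already available. Since an ordinary cusp is by definition a singularity, I first record the local data at such a point: if $t_0$ is singular, then $\gamma'(t_0)=\alpha(t_0)\xi(t_0)=0$ forces $\alpha(t_0)=0$, because $\xi(t_0)\in S$ is nonzero. Moreover, as $(\gamma,\eta)$ is a Legendre immersion we must have $\eta'(t_0)\neq 0$, so (\ref{2ndcurvature}) yields $\kappa(t_0)\neq 0$. Thus at any such $t_0$ we always have $\alpha(t_0)=0$ and $\kappa(t_0)\neq 0$, and it is under this standing condition that the claimed equivalence is to be read.

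Next I would differentiate (\ref{1stcurvature}) twice, replacing $\xi'$ by the expression (\ref{xideriv}), and evaluate at $t_0$. The vanishing of $\alpha(t_0)$ kills every surviving term carrying the factor $\alpha(t_0)$, leaving
\begin{align*} \gamma''(t_0)=\alpha'(t_0)\xi(t_0), \qquad \gamma'''(t_0)=\alpha''(t_0)\xi(t_0)+2\alpha'(t_0)\xi'(t_0), \end{align*}
where $\xi'(t_0)=-\kappa(t_0)\rho(\eta(t_0))b(\xi(t_0))$ is a nonzero multiple of $b(\xi(t_0))$. To test independence I would pair these two vectors with the fixed symplectic form. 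Using $[\xi(t_0),\xi(t_0)]=0$, the only surviving term is
\begin{align*} [\gamma''(t_0),\gamma'''(t_0)]=2\alpha'(t_0)^2[\xi(t_0),\xi'(t_0)]=-2\alpha'(t_0)^2\kappa(t_0)\rho(\eta(t_0))[\xi(t_0),b(\xi(t_0))]. \end{align*}
Since $[\xi(t_0),b(\xi(t_0))]>0$ by the defining property of $b$, while $\kappa(t_0)\neq 0$ and $\rho(\eta(t_0))>0$, this quantity is a nonzero constant times $\alpha'(t_0)^2$. Hence $\gamma''(t_0)$ and $\gamma'''(t_0)$ are linearly independent if and only if $\alpha'(t_0)\neq 0$, which is exactly the assertion.

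The only subtle point --- the part I would take care to justify rather than merely assert --- is that $\xi'(t_0)$ is genuinely nonzero and transverse to $\xi(t_0)$, i.e.\ that $\rho(\eta(t_0))>0$ and that $b(\xi(t_0))$ is not parallel to $\xi(t_0)$. Both rest on smoothness and strict convexity of the plane: the former makes $b$ restrict to a smooth diffeomorphism of $S$, so its differential cannot annihilate the tangent direction $b(v)$ and $\rho(v)=||\mathrm{D}b_v(b(v))||>0$; the latter is the standard fact that two nonzero Birkhoff-orthogonal vectors are never linearly dependent. Everything else is routine bookkeeping with (\ref{1stcurvature}), (\ref{2ndcurvature}), and (\ref{xideriv}).
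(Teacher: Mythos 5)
Your proof is correct and follows essentially the same route as the paper's: differentiate (\ref{1stcurvature}) twice at the singular point, observe that $\gamma''(t_0)=\alpha'(t_0)\xi(t_0)$ and $\gamma'''(t_0)=\alpha''(t_0)\xi(t_0)+2\alpha'(t_0)\xi'(t_0)$, and conclude from the fact that $\xi'(t_0)$ is nonzero and transverse to $\xi(t_0)$. The only difference is cosmetic: you make the transversality quantitative via the symplectic form and formula (\ref{xideriv}), whereas the paper argues directly from $\xi'(t_0)=Db_{\eta(t_0)}(\eta'(t_0))\neq 0$ and $\xi(t_0)\dashv_B\xi'(t_0)$.
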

\begin{proof} This comes from the two-fold straightforward differentiation of equality (\ref{1stcurvature}). In a singular point $t_0 \in I$ we have
\begin{align*} \gamma''(t_0) = \alpha'(t_0)\xi(t_0) \ \ \mathrm{and} \\ \gamma'''(t_0) = 2\alpha'(t_0)\xi'(t_0) + \alpha''(t_0)\xi(t_0).
\end{align*}
Since $(\gamma,\eta)$ is an immersion, we have that $\eta'(t_0) \neq 0$. Therefore, $\xi'(t_0) = Db_{\eta(t_0)}(\eta'(t_0)) \neq 0$ and $\xi(t_0) \dashv_B \xi'(t_0)$. The desired follows.

\end{proof}

It is clear that the definition of an ordinary cusp does not involve any metric or orthogonality concept fixed in the plane. Indeed, one just needs differentiation of a curve to define an ordinary cusp. The previous lemma, despite being easy and intuitive, shows us that, using the curvature defined by the Minkowski metric, one can characterize an ordinary cusp of a Legendre immersion in the same way as we would do it in the standard Euclidean metric. \\

Going a little further in this direction, we head now to formalize the idea that \emph{the orientation changes when we pass through an ordinary cusp}, and we will do this by using only the machinery defined here. Indeed, since $\mathrm{sgn}[\eta(t),\gamma'(t)] = \mathrm{sgn}(\alpha(t))$, it follows that the orientation of the basis $\{\eta(t),\gamma'(t)\}$ changes. By the last lemma, the sign of $\alpha$ changes at a point $t_0 \in I$ if and only if $\gamma(t_0)$ is an ordinary cusp, and then it follows that the orientation of the basis $\{\eta(t),\gamma'(t)\}$ (well defined in a punctured neighborhood of $t_0$) changes when, and only when, we pass through an ordinary cusp. As a consequence we re-obtain the following well known result.

\begin{prop}\label{propcusp} Let $\gamma:S^1\rightarrow X$ be a closed front, where $S^1$ is the usual circle $\mathbb{R}/\mathbb{Z}$. Then $\gamma$ has an even number of ordinary cusps.
\end{prop}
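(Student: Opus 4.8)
The plan is to exploit the characterization just proved in Lemma~\ref{ordcusp} together with the sign-of-$\alpha$ observation made in the paragraph preceding the statement. Recall that $\mathrm{sgn}[\eta(t),\gamma'(t)] = \mathrm{sgn}(\alpha(t))$, so the ordinary cusps of the closed front $\gamma$ are exactly the points where $\alpha$ changes sign, and at each such point $\alpha'(t_0) \neq 0$ guarantees that the sign change is genuine (a transversal crossing of zero, not a tangency). The key idea is thus to translate ``counting cusps'' into ``counting sign changes of the scalar function $\alpha$ along the closed parameter interval'', and then to invoke the elementary topological fact that a real-valued continuous function on a circle must change sign an even number of times.

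First I would observe that since $(\gamma,\eta)$ is a Legendre immersion, $\gamma$ is a front, and at any singular point $t_0$ we have $\gamma'(t_0) = \alpha(t_0)\xi(t_0) = 0$, forcing $\alpha(t_0) = 0$. Conversely, wherever $\alpha(t) = 0$ we have a singular point. Hence the zeros of $\alpha$ are precisely the singular points of $\gamma$. Next I would restrict attention to the cusps: by Lemma~\ref{ordcusp}, $t_0$ is an ordinary cusp exactly when $\alpha(t_0) = 0$ and $\alpha'(t_0) \neq 0$, i.e.\ exactly when $t_0$ is a \emph{simple} (transversal) zero of $\alpha$. At such a zero $\alpha$ changes sign, whereas the definition of ``front'' together with the hypothesis that every singular point under consideration is an ordinary cusp means all zeros of $\alpha$ are simple. (If one wishes to cover the completely general closed front, one restricts the statement to the generic case where all cusps are ordinary, which is the standing assumption of the section.)

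The main argument is then purely topological. Regard $\alpha$ as a smooth real-valued function on $S^1 = \mathbb{R}/\mathbb{Z}$, periodic by construction. As we traverse the circle once and return to the start, the value $\alpha(t)$ returns to its initial value, so the total number of sign changes of $\alpha$ around the loop must be even --- each passage from positive to negative must eventually be balanced by a passage back from negative to positive in order for $\alpha$ to close up. Since the ordinary cusps are in bijection with these simple zeros, and each simple zero contributes exactly one sign change, the number of ordinary cusps is even. Concretely, one can phrase this via the intermediate value theorem: order the zeros $t_1 < t_2 < \cdots < t_m$ around the circle; between consecutive zeros $\alpha$ has constant sign, and matching the sign on $(t_m, t_1 + 1)$ with the sign on $(t_1 + 1, \dots)$ (using periodicity) shows the signs must alternate an even number of times.

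The step I expect to require the most care is not the topology but the bookkeeping at the boundary between ``cusp'' and ``non-cusp'' zeros: one must be sure that the only singular points are ordinary cusps (so that every zero of $\alpha$ is simple and genuinely flips the sign), since a degenerate singularity with $\alpha(t_0) = \alpha'(t_0) = 0$ could fail to change the sign of $\alpha$ and would break the parity count. Under the standing genericity assumption of the section this is immediate, and the parity conclusion follows cleanly; this is exactly why the preceding discussion emphasizes that the orientation of the frame $\{\eta(t),\gamma'(t)\}$ reverses precisely when, and only when, we pass through an ordinary cusp.
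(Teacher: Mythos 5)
Your argument is essentially the paper's: the published proof tracks the sign of $[\eta(t),\gamma'(t)]$, which by the discussion preceding the proposition has the same sign as $\alpha(t)$, and the remark immediately following the proposition records your version verbatim (parity of zero-crossings of the periodic function $\alpha$) as an equivalent proof. The one caveat you rightly flag --- that a degenerate singular point with $\alpha(t_0)=\alpha'(t_0)=0$ could in principle still flip the sign of $\alpha$ (e.g.\ a zero of odd order $\geq 3$), so the count really needs every singular point to be an ordinary cusp --- is not actually covered by a standing assumption at this point in the paper, but the paper's own proof silently makes the same assumption, so your proposal is no weaker than the original.
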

\begin{proof} The intuitive idea here is that we must pass through an even number of ordinary cusps so that the sign of $[\eta(t),\gamma'(t)]$ is not inverted when we return to the initial point (see Figure \ref{oddcusps}). We will formalize this. \\

Let $\eta:S^1\rightarrow S$ be a normal vector field such that $(\gamma,\eta)$ is a Legendre immersion. We identify $S^1$ with the interval $[0,1]$ and, up to a translation in the parameter, assume that $\gamma(0) = \gamma(1)$ is a regular point. Let $\{t_1<t_2 < \dots < t_m\}$ be the set of all ordinary cusps of $\gamma$, and assume that $m$ is odd. It is clear that the sign of $[\eta(t),\gamma'(t)]$ is constant in each interval $(t_{j-1},t_j)$, and also before $t_1$ and after $t_m$. Since there is no ordinary cusp in the interval $(t_m,1+t_1)$, it also follows that $m$ is even. Otherwise, the sign of $[\eta(t),\gamma'(t)]$ would be distinct in $(t_m,1]$ and $[0,t_1)$.

\end{proof}

\begin{figure}[h]
\centering
\includegraphics{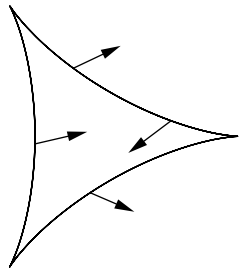}
\caption{A front must have an even number of ordinary cusps.}
\label{oddcusps}
\end{figure}

\begin{remark} In view of Lemma \ref{ordcusp}, an ordinary cusp is a zero-crossing of $\alpha$, and the converse is also true. Since $\gamma$ is closed, we have that $\alpha$ is a periodic smooth function, and then we must have an even number of zero-crossings. This (a little less geometric) argument also works for proving Proposition \ref{propcusp}.
\end{remark}

Our next task is to obtain the \emph{Maslov index} (or \emph{zigzag number}) of a closed front using the generalized curvature of a Legendre immersion (for the Euclidean case this was done in \cite{Fu-Ta}). By \cite{umehara} we are inspired to formulate the following

\begin{definition} Let $t_0 \in I$ be an ordinary cusp of a front $\gamma:I\rightarrow X$ with associated normal field $\eta$. Then, if $[\eta(t_0),\eta'(t_0)] > 0$, we say that $t_0$ is a \emph{zig}, and if $[\eta(t_0),\eta'(t_0)] < 0$, we say that $t_0$ is a \emph{zag}.
\end{definition}

Notice that we always have $[\eta(t_0),\eta'(t_0)] \neq 0$ on an ordinary cusp ($\gamma$ is a front). Geometrically, by this definition we can distinguish whether the normal field rotates counterclockwise or clockwise in the neighborhood of an ordinary cusp, and this is equivalent to the definition given in \cite{umehara}. As one may expect, whether an ordinary cusp is a zig or a zag does not depend on the metric (and consequently not on the orthogonality relation) fixed in the plane. 

\begin{prop} Let $\gamma:I\rightarrow X$ be a front, and let $t_0 \in I$ be an ordinary cusp. Then we have one of the following statements.\\

\noindent\emph{\textbf{(a)}} For every $\varepsilon > 0$ there exist $t_1,t_2 \in (t_0-\varepsilon,t_0+\varepsilon)$ such that $t_1<t_0<t_2$ and $[\gamma'(t_1),\gamma'(t_2)] < 0$, or \\

\noindent\emph{\textbf{(b)}}  for every $\varepsilon > 0$ there exist $t_1,t_2 \in (t_0-\varepsilon,t_0+\varepsilon)$ such that $t_1<t_0<t_2$ and $[\gamma'(t_1),\gamma'(t_2)] > 0$. \\

\noindent In the first case, the cusp is a zig. In the second one, we have a zag.
\end{prop}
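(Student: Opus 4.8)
The plan is to reduce the statement to a sign computation, using the factorization $\gamma'(t)=\alpha(t)\xi(t)$ from (\ref{1stcurvature}) together with the bilinearity and antisymmetry of $[\cdot,\cdot]$, which give
\[
[\gamma'(t_1),\gamma'(t_2)] = \alpha(t_1)\alpha(t_2)\,[\xi(t_1),\xi(t_2)].
\]
I would then control the two factors separately on a small interval around $t_0$. For the scalar factor, note that $t_0$ being singular forces $\gamma'(t_0)=0$, hence $\alpha(t_0)=0$, while Lemma \ref{ordcusp} gives $\alpha'(t_0)\neq 0$; thus $\alpha$ has a simple zero and changes sign at $t_0$, so there is $\delta>0$ with $\alpha(t_1)\alpha(t_2)<0$ whenever $t_0-\delta<t_1<t_0<t_2<t_0+\delta$.

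For the vectorial factor I would use that $\xi(t)=b(\eta(t))\in S$ is the positively oriented (counterclockwise) unit tangent to $S$ at $\eta(t)$: it is tangent because $\eta\dashv_B\xi$ makes the line through $\eta$ in direction $\xi$ support $S$, and it is positively oriented because $[\eta,\xi]>0$ by definition of $b$. Since $\gamma$ is a front, $\eta'(t_0)=\kappa(t_0)\xi(t_0)\neq 0$, so $\kappa(t_0)\neq 0$, and after shrinking $\delta$ we keep $\kappa\neq 0$. By (\ref{2ndcurvature}), $\eta$ then traverses $S$ monotonically in the rotational sense of $\mathrm{sgn}\,\kappa(t_0)$, and as $b|_S$ is an orientation-preserving diffeomorphism of $S$, its image $\xi=b(\eta)$ winds around $S$ in the same sense. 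Hence for $t_1<t_2$ near $t_0$ the two distinct points $\xi(t_1),\xi(t_2)$ are ordered along $S$ according to $\mathrm{sgn}\,\kappa(t_0)$, which yields $\mathrm{sgn}\,[\xi(t_1),\xi(t_2)]=\mathrm{sgn}\,\kappa(t_0)$; an equivalent analytic route is the expansion $[\xi(t_1),\xi(t_2)]=(t_2-t_1)[\xi(t_1),\xi'(t_1)]+o(t_2-t_1)$ combined with $[\xi(t_0),\xi'(t_0)]=\kappa(t_0)[\xi(t_0),\mathrm{D}b_{\eta(t_0)}\xi(t_0)]$ and the positivity of the last bracket.

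Combining, $\mathrm{sgn}\,[\gamma'(t_1),\gamma'(t_2)]=-\,\mathrm{sgn}\,\kappa(t_0)$ for all admissible $t_1<t_0<t_2$ in $(t_0-\delta,t_0+\delta)$, so for any $\varepsilon>0$ the required points exist inside $(t_0-\min(\delta,\varepsilon),t_0+\min(\delta,\varepsilon))$. It remains to match this with the zig/zag dichotomy: (\ref{2ndcurvature}) gives $[\eta(t_0),\eta'(t_0)]=\kappa(t_0)[\eta(t_0),\xi(t_0)]$ with $[\eta(t_0),\xi(t_0)]>0$, so a zig means $\kappa(t_0)>0$ and a zag means $\kappa(t_0)<0$; thus the case $[\gamma'(t_1),\gamma'(t_2)]<0$ in (a) occurs exactly at a zig, and the case $[\gamma'(t_1),\gamma'(t_2)]>0$ in (b) exactly at a zag. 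The one genuinely nonroutine step is the orientation claim for $[\xi(t_1),\xi(t_2)]$, i.e.\ verifying that $\xi$ winds around $S$ in the sense prescribed by $\kappa(t_0)$; this rests on recognizing $\xi$ as the positively oriented tangent to $S$ at $\eta$ and on the orientation-preservation of the map $b$ restricted to $S$.
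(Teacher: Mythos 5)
Your proof is correct and follows essentially the same route as the paper: the factorization $[\gamma'(t_1),\gamma'(t_2)]=\alpha(t_1)\alpha(t_2)[\xi(t_1),\xi(t_2)]$, the sign change of $\alpha$ at an ordinary cusp via Lemma \ref{ordcusp}, and the reduction to the sense in which $\eta$ (hence $\xi$) traverses $S$. In fact you carry the last step further than the paper does, by explicitly using that $b|_S$ is orientation preserving to identify $\mathrm{sgn}\,[\xi(t_1),\xi(t_2)]$ with $\mathrm{sgn}\,\kappa(t_0)$ and thereby pinning down which case is a zig and which is a zag, a point the paper's proof leaves implicit.
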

\begin{proof} Assume that $(\gamma,\eta)$ is a Legendre immersion, and let $\alpha$ be as in (\ref{1stcurvature}). Since $t_0$ is an ordinary cusp, it follows that for small $\varepsilon > 0$ we have that $\alpha$ has constant and distinct signs in each of the lateral neighborhoods $(t_0-\varepsilon,t_0)$ and $(t_0,t_0+\varepsilon)$ of $t_0$. Therefore, for any $t_1,t_2 \in (t_0-\varepsilon,t_0+\varepsilon)$ with $t_1 < t_0 < t_2$ it holds that $\alpha(t_1)\alpha(t_2) < 0$. Now we write
\begin{align*} [\gamma'(t_1),\gamma'(t_2)] = \alpha(t_1)\alpha(t_2)[\xi(t_1),\xi(t_2)].
\end{align*}
Hence, in $(t_0-\varepsilon,t_0+\varepsilon)$ the sign of $[\gamma'(t_1),\gamma'(t_2)]$ for $t_1 < t_0 < t_2$ depends only on the sign of $[\xi(t_1),\xi(t_2)]$. On the other hand, since $(\gamma,\eta)$ is an immersion, we have that $\eta'(t_0) \neq 0$. Then, taking a smaller $\varepsilon > 0$ if necessary, we may assume that $\eta$ is injective when restricted to the interval $(t_0-\varepsilon,t_0+\varepsilon)$. Consequently, $\xi$ is also injective in $(t_0-\varepsilon,t_0+\varepsilon)$, and the sign of $[\xi(t_1),\xi(t_2)]$ for $t_1,t_2 \in (t_0-\varepsilon,t_0+\varepsilon)$ with $t_1<t_2$ only depends on how $\eta$ walks through the unit circle in this interval (clockwise or counterclockwise).

\end{proof}

A zero-crossing of the curvature function $\kappa$ of a Legendre immersion is called an \emph{inflection point}. One can have a better understanding of the classification of ordinary cusps by no\-ti\-cing that two consecutive ordinary cusps of a frontal have different types if and only if there is an odd number of inflection points between them. Indeed, this follows from the fact that $\mathrm{sgn}[\eta(t),\eta'(t)] = \mathrm{sgn}(\kappa(t)[\eta(t),b(\eta(t))]) = \mathrm{sgn}(\kappa(t))$ and from the continuity of $\kappa$. \\

Let $\gamma:S^1 \rightarrow X$ be a closed front, and let $C_{\gamma}:= \{t_1,...,t_m\}$ be the set of its (ordered) ordinary cusps. Attribute the letter $a$ to a zig, and $b$ to a zag, and form the word $w_{\gamma}:=t_1t_2...t_m$. Since $m$ is even, it follows that the identification of $w_{\gamma}$ in the free product $\mathbb{Z}_2*\mathbb{Z}_2$ (considering the reduction $a^2=b^2=1$) must be of the form $(ab)^k$ of $(ba)^k$. The number $k$ is called the \emph{Maslov index} (or \emph{zigzag number}) of $\gamma$, and it will be denoted by $z(\gamma)$. \\

We will follow \cite{umehara} to obtain the Maslov index in terms of the curvature pair of a Legendre immersion, but now the considered curvature pair is given by Birkhoff orthogonality instead of Euclidean orthogonality. First, let $\mathrm{P}^1(\mathbb{R})$ be the real projective line, and let $[ x : y ]$, defined as $y/x$, be coordinates on it. The curvature pair of a Legendre immersion can then be regarded as the smooth map $k_{\gamma}:S^1\rightarrow \mathrm{P}^1(\mathbb{R})$ given by
\begin{align*} k_{\gamma}(t) = [\alpha(t):\kappa(t)], 
\end{align*}
where $\alpha$ and $\kappa$ are, as usual, given as in (\ref{1stcurvature}) and (\ref{2ndcurvature}). If we identify canonically the projective line with the one-dimensional circle (see Figure \ref{projectiveline}), then we can naturally define the \emph{rotation number} of $k_{\gamma}$ as its absolute number of (complete) turns over the circle, counted with sign depending on the orientation. In the following, we say that a front is \emph{generic} if all of its singular points are ordinary cusps and all of its self-intersections are \emph{double points}, which means that if $t_0 \neq t_1$ and $\gamma(t_0) = \gamma(t_1)$, then $\eta(t_0)$ and $\eta(t_1)$ are linearly independent vectors.

\begin{figure}[h]
\centering
\includegraphics{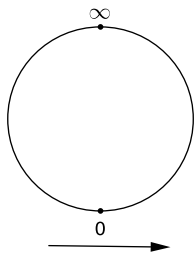}
\caption{Identification $\mathrm{P}^1(\mathbb{R}) \simeq S^1$, with orientation (positive ratios are on the right-hand side).}
\label{projectiveline}
\end{figure}

\begin{teo}\label{teozigzag} Let $\gamma:S^1\rightarrow X$ be a generic closed front. Then the zigzag number of $\gamma$ equals the rotation number of $k_{\gamma}$.
\end{teo}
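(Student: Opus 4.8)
The plan is to relate the zigzag word $w_\gamma$ directly to the way the curve $k_\gamma$ traverses the projective line, by tracking two signs simultaneously: the sign of $\alpha$ (whose zero-crossings are the ordinary cusps, by Lemma \ref{ordcusp}) and the sign of $\kappa$ (whose zero-crossings are the inflection points). The key observation already assembled in the excerpt is that $\mathrm{sgn}[\eta(t),\eta'(t)]=\mathrm{sgn}(\kappa(t))$, so an ordinary cusp $t_j$ is a zig precisely when $\kappa(t_j)>0$ and a zag when $\kappa(t_j)<0$. Thus the letters of $w_\gamma$ are determined entirely by the sign of $\kappa$ at the successive zeros of $\alpha$.

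First I would set up the dictionary between the map $k_\gamma(t)=[\alpha(t):\kappa(t)]$ and the circle $S^1$ under the identification of Figure \ref{projectiveline}. An ordinary cusp is a point where $\alpha$ vanishes (transversally, since $\alpha'(t_0)\neq0$), which is exactly where $k_\gamma$ crosses one of the two ``vertical'' points $[0:\pm\kappa]$ of the projective line — these are the poles of the coordinate $y/x$. The sign of $\kappa$ at such a crossing tells us whether $k_\gamma$ passes through the point corresponding to a zig or the point corresponding to a zag. So the ordered word $w_\gamma$ records, in order, which of the two distinguished points of $\mathrm{P}^1(\mathbb{R})$ the curve $k_\gamma$ sweeps across. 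Between consecutive cusps, $\alpha$ keeps a constant sign, so $k_\gamma$ stays in one of the two open arcs (the ``finite'' part of the projective line), and an inflection point ($\kappa=0$) is a crossing of the point $[1:0]$. The reduction in $\mathbb{Z}_2*\mathbb{Z}_2$ to $(ab)^k$ or $(ba)^k$ is exactly the combinatorial count of how many net full loops $k_\gamma$ performs, since cancelling $aa$ or $bb$ corresponds to $k_\gamma$ approaching a distinguished point and retreating without completing a loop.

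The core computation is then to show that the rotation number — the signed count of complete turns of $k_\gamma$ around $S^1$ — equals this reduced length $k$. I would argue that each complete turn of $k_\gamma$ passes through each of the two vertical points $[0:+\kappa]$ and $[0:-\kappa]$ exactly once, contributing one $a$ and one $b$ (in the order dictated by orientation) to $w_\gamma$; conversely, a partial excursion that returns without circling contributes a cancelling pair. Genericity guarantees these crossings are transversal and isolated, so the count is well defined. Making this precise is essentially a statement that the rotation number of a loop in $\mathrm{P}^1(\mathbb{R})\simeq S^1$ equals half the signed number of times it crosses any fixed pair of antipodal reference points, which matches the exponent in the reduced word $(ab)^k$.

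The main obstacle I expect is bookkeeping the \emph{signs and orientation} correctly: the identification $\mathrm{P}^1(\mathbb{R})\simeq S^1$ in Figure \ref{projectiveline} fixes a convention for which direction of crossing counts positively, and one must verify that a zig-then-zag traversal corresponds to a \emph{positively} oriented turn (yielding $(ab)^k$) while the reverse yields $(ba)^k$, consistent with the sign of the rotation number. Care is needed because the transition between a zig and a zag is mediated by whether $\kappa$ changes sign between the two cusps, i.e. by the parity of inflection points between them, as noted in the excerpt. I would reconcile the purely combinatorial reduction in $\mathbb{Z}_2*\mathbb{Z}_2$ with the topological winding count by checking the base case (a single positive loop gives the word $ab$, hence $k=1$) and then arguing by the additivity of both quantities under concatenation of arcs between consecutive cusps.
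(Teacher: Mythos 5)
Your overall strategy --- reading the zigzag word off the signed crossings of the point at infinity by $k_{\gamma}$, using that the cusps are the transversal zeros of $\alpha$ and that a cusp is a zig or a zag according to $\mathrm{sgn}\,\kappa$ --- is the same as the paper's. But the step you call the ``core computation'' rests on a false premise: $[0:\kappa]$ and $[0:-\kappa]$ are the \emph{same} point of $\mathrm{P}^1(\mathbb{R})$, so there are no ``two vertical points,'' and a complete turn of $k_{\gamma}$ crosses the point at infinity once, not twice. Which letter ($a$ or $b$) a crossing contributes is not determined by which point of $\mathrm{P}^1(\mathbb{R})$ is crossed; it is the extra datum $\mathrm{sgn}\,\kappa(t_j)$, which only becomes ``positional'' after lifting to the double cover $t\mapsto(\alpha,\kappa)/\|(\alpha,\kappa)\|$. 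Your picture can be repaired by working in that cover, but then the factor of $2$ between its winding number and the $\infty$-crossing count of $k_{\gamma}$ has to be tracked explicitly, and that bookkeeping is exactly what is absent.

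The more substantive gap is the mechanism deciding whether two consecutive $\infty$-crossings cancel or reinforce. The paper obtains it from two facts: the orientation of the crossing at a cusp $t_0$ is read from $\mathrm{sgn}(\alpha'(t_0)\kappa(t_0))$ (because the sign of $\kappa/\alpha$ near $t_0$ equals that of $\alpha\kappa$, whose derivative at the simple zero $t_0$ is $\alpha'(t_0)\kappa(t_0)$), and $\mathrm{sgn}(\alpha')$ alternates at consecutive cusps. Hence two consecutive cusps of the same type yield oppositely oriented crossings (net contribution zero, matching $a^2=b^2=1$), while a zig followed by a zag yields two crossings of equal orientation, i.e.\ a full turn. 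Your substitute --- that $a^2=1$ corresponds to $k_{\gamma}$ ``approaching a distinguished point and retreating without completing a loop,'' plus a base case and ``additivity under concatenation'' --- does not close this: by genericity every crossing is a genuine transversal crossing, not a retreat, and neither the reduced word length nor the zigzag count is additive under concatenation of arcs (a word reducing to $ab$ followed by one reducing to $ba$ reduces to the empty word). You need the pointwise orientation computation at each cusp; once you have it, the induction is unnecessary.
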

\begin{proof} Following \cite{umehara}, the strategy of the proof is to count the number of times that $k_{\gamma}$ passes through the point $[0:1]$ (=$\infty$) two consecutive times with the same orientation. First, notice that $k_{\gamma}(t) = [0:1]$ if and only if $t$ is an ordinary cusp. Now observe that the sign of $\kappa(t)/\alpha(t)$ in a punctured neighborhood of a cusp is the same as the sign of $\alpha\kappa$. Therefore, we can decide whether we have a clockwise or a counterclockwise $\infty$-crossing at a singularity $t_0 \in S^1$ looking to the sign of $(\alpha\kappa)'(t_0) = \alpha'(t_0)\kappa(t_0)$. Namely, if $\alpha'(t_0)\kappa(t_0) < 0$, the $\infty$-crossing is counterclockwise, and if $\alpha'(t_0)\kappa(t_0) > 0$, then it is clockwise. \\

Now let $t_0,t_1 \in S^1$ be two consecutive singularities. It is clear that $\alpha'(t_0)$ and $\alpha'(t_1)$ have opposite signs. Therefore, if we have two consecutive zigs, or two consecutive zags, then the associated consecutive $\infty$-crossings have the same orientation, and consequently play no role in the rotation number. On the other hand, a zig followed by a zag (or vice-versa) yields a complete (positive of negative) turn over $\mathrm{P}^1(\mathbb{R})$. This shows what we had to prove.

\end{proof}

\begin{remark} Since the choice of a normal field to turn a front into a Legendre immersion is not unique, as well as the associated curvature pair is not invariant under a re-parametrization of the front, a comment is due. The classification of \emph{all} ordinary cusps will change if we replace the field $\eta$ by $-\eta$, and hence the Maslov index remains the same. Also, it is easily seen that a re-parametrization of the front yields a new curvature pair where both previous curvature functions are multiplied by a same function. Therefore, the map $k_{\gamma}:S^1\rightarrow\mathrm{P}^1(\mathbb{R})$ defined previously is invariant under a re-parametrization of the front, and so is its rotation number.
\end{remark}

We shall describe now another way to obtain the Maslov index of a closed front. In view of Theorem \ref{teozigzag}, what changes is that we count the rotation number of $k_{\gamma}$ by regarding zero-crossings instead of $\infty$-crossings. Geometrically, instead of using types of singular points, we classify inflection points (recall that, for us, an inflection point is a zero-crossing of the curvature function $\kappa$). As the reader will notice, this approach has the advantage of avoiding the use of reductions in free products. \\

Let, as usual, $(\gamma,\eta):S^1 \rightarrow X\times S$ be a generic closed front with associated curvature pair $(\alpha,\kappa)$. We say that an inflection point $t_0 \in S^1$ is a \emph{flip} if $t_0$ is a zero-crossing from negative to positive of $\alpha(t)\kappa(t)$, and a \emph{flop} if $t_0$ is a zero-crossing from positive to negative of $\alpha(t)\kappa(t)$. Notice that every inflection point is a flip or a flop, since $\alpha(t_0)$ does not vanish ($\gamma$ is a front) and $t_0$ is a zero-crossing of $\kappa$. We have

\begin{teo} The Maslov index of a generic closed front $\gamma$ is half the absolute value of the difference between its numbers of flips and flops. In other words,
\begin{align*} z(\gamma) = \frac{1}{2}|\#\mathrm{flip} - \#\mathrm{flop}|, 
\end{align*}
where $\#\mathrm{flip}$ and $\#\mathrm{flop}$ denotes the numbers of inflection points for each respective type. 
\end{teo}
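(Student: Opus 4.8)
The plan is to recompute the rotation number of $k_{\gamma}$ — which by Theorem \ref{teozigzag} already equals the Maslov index $z(\gamma)$ — but now detecting it through the passages of $k_{\gamma}$ across the point $[1:0]$ (that is, $0$) rather than across $[0:1]$ (that is, $\infty$), exactly as announced before the statement. First I would record that, since $\gamma$ is a front, the functions $\alpha$ and $\kappa$ never vanish simultaneously, so the pair
\begin{align*} f := (\alpha,\kappa):S^1\rightarrow\mathbb{R}^2\setminus\{o\} \end{align*}
is a well-defined closed curve avoiding the origin, and $k_{\gamma}$ is precisely its projectivization $t\mapsto[f(t)]$. The whole argument then reduces to relating the inflection points — the zeros of $\kappa$, i.e.\ the instants at which $f$ meets the horizontal axis $\{\kappa=0\}$ — to the number of turns that $f$ performs around $o$, which is the content of the rotation number of $k_{\gamma}$.

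The key computation I would carry out is the orientation of each such crossing. At an inflection $t_0$ we have $\kappa(t_0)=0$ and $\alpha(t_0)\neq 0$, so the angular speed of $f$ at $t_0$ is
\begin{align*} \frac{d}{dt}\arg f(t_0) = \frac{\alpha(t_0)\kappa'(t_0)-\kappa(t_0)\alpha'(t_0)}{||f(t_0)||^2} = \frac{\kappa'(t_0)}{\alpha(t_0)}, \end{align*}
whose sign coincides with that of $\alpha(t_0)\kappa'(t_0)=(\alpha\kappa)'(t_0)$. Comparing with the definitions preceding the statement, this says exactly that a \emph{flip} is a counterclockwise crossing of the horizontal axis by $f$, while a \emph{flop} is a clockwise one. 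Thus $\#\mathrm{flip}-\#\mathrm{flop}$ is nothing but the signed number of crossings of the full horizontal axis by the curve $f$. Genericity enters here to guarantee that every zero of $\kappa$ is simple, so that these crossings are transversal and the signed count is well defined.

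To finish I would match this signed count with the turning of $f$. Writing $W$ for the winding number of $f$ around $o$, each complete turn of $f$ meets the horizontal axis in exactly two points, and by the orientation computation above both are traversed in the same (positive, if $W>0$) sense; hence the signed crossing count equals $2W$, i.e.\ $\#\mathrm{flip}-\#\mathrm{flop}=2W$. Since the rotation number of $k_{\gamma}$ is the number $|W|$ of turns of the direction of $f$, Theorem \ref{teozigzag} yields $z(\gamma)=|W|=\frac{1}{2}|\#\mathrm{flip}-\#\mathrm{flop}|$, as claimed.

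The main obstacle I anticipate is precisely this factor of $2$: one must be certain that flips and flops count the \emph{two} horizontal-axis crossings occurring per revolution of $f$, and not the revolutions themselves, and that the orientation fixed for $\mathrm{P}^1(\mathbb{R})\simeq S^1$ in Figure \ref{projectiveline} is indeed the one under which a flip is a positive $0$-crossing; otherwise the identity could slip by a sign or, worse, by this very factor of two. Keeping the bookkeeping consistent with the $\infty$-crossing count used in the proof of Theorem \ref{teozigzag} — where the same doubling phenomenon occurs for cusps on the vertical axis — is the delicate point, and it is what forces the $\frac{1}{2}$ in the final formula.
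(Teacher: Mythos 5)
Your proof is correct, and its kernel --- a flip is a positively oriented and a flop a negatively oriented zero-crossing, detected by the sign of $(\alpha\kappa)'(t_0)=\alpha(t_0)\kappa'(t_0)$, so that $\#\mathrm{flip}-\#\mathrm{flop}$ becomes a signed crossing count computing a degree --- coincides with the paper's. The difference lies in where the factor $\frac{1}{2}$ comes from. The paper never lifts to $f=(\alpha,\kappa):S^1\rightarrow\mathbb{R}^2\setminus\{o\}$: it quotes from Fukunaga--Takahashi the identity $z(\gamma)=\frac{1}{2}|\mathrm{deg}(k_{\gamma})|$ and evaluates $\mathrm{deg}(k_{\gamma})$ as the signed number of preimages of the regular value $[1:0]$ in $\mathrm{P}^1(\mathbb{R})$, so the $\frac{1}{2}$ is imported wholesale. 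You derive it geometrically (the full line $\{\kappa=0\}$ is crossed twice per revolution of $f$, hence $\#\mathrm{flip}-\#\mathrm{flop}=2W$ with $W$ the winding number of $f$), which is arguably cleaner, but it shifts all the weight onto your appeal to Theorem \ref{teozigzag}: for your chain to close, ``rotation number of $k_{\gamma}$'' must mean $|W|=\frac{1}{2}|\mathrm{deg}(k_{\gamma})|$ and not the literal number of complete turns of $k_{\gamma}$ over $\mathrm{P}^1(\mathbb{R})$, which equals $2|W|$ because $S^1\rightarrow\mathrm{P}^1(\mathbb{R})$ is a double cover; for instance, $(\alpha,\kappa)=(\cos 2\pi t,\sin 2\pi t)$ gives one zig, one zag, $z(\gamma)=1=W$, yet $\mathrm{deg}(k_{\gamma})=2$. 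That is the normalization under which Theorem \ref{teozigzag} is true, so your argument does close, but you should state this convention explicitly --- it is precisely the factor-of-two trap you yourself flag. A last, shared caveat: both your count and the paper's need the zero-crossings of $\kappa$ to be transversal (equivalently, $[1:0]$ to be a regular value of $k_{\gamma}$), and the stated genericity hypothesis (ordinary cusps, double points) does not literally guarantee this; it is an implicit assumption in the original as much as in your write-up.
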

\begin{proof} Before proving the theorem, it is interesting to capture the combinatorial flavor of the problem. Notice first that a flip corresponds to a counterclockwise zero-crossing of $k_{\gamma}$ in $\mathrm{P}^1(\mathbb{R})$, and a flop corresponds to a clockwise zero-crossing. Between two consecutive singularities, we have two possibilities:\\

\noindent (1) The number of inflection points is even. In this case we have the same number of flips and flops, since $\alpha$ does not change its sign between two consecutive zeros. \\

\noindent (2) The number of inflection points is odd. In this case we have $|\#\mathrm{flip}-\#\mathrm{flop}| = 1$ between these singularities. \\

Moreover, successive zero-crossings of $\kappa$ are always alternate, and then we have two con\-se\-cu\-ti\-ve flips (or flops) when there is a singular point between two consecutive inflection points. The reader is invited to draw some concrete examples, to capture the ideas in a better way. We will give an analytic proof, however. As noticed in \cite{Fu-Ta}, the zigzag number is half the absolute value of the degree of the map $k_{\gamma}:S^1\rightarrow \mathrm{P}^1(\mathbb{R})$. Since $S^1$ is path connected, we can calculate the degree of $k_{\gamma}$ by counting the points of the set $k_{\gamma}^{-1}([1:0])$ where the derivative is orientation preserving/reversing. In other words, the degree of $k_{\gamma}$ is the difference between the numbers of counterclockwise and clockwise zero-crossings in $\mathrm{P}^{1}(\mathbb{R})$. Since each counterclockwise zero-crossing corresponds to a flip, and each clockwise zero-crossing corresponds to a flop, we have indeed
\begin{align*} z(\gamma) = \frac{1}{2}\mathrm{deg}(k_{\gamma})= \frac{1}{2}|\#\mathrm{flip} - \#\mathrm{flop}|,
\end{align*}
as we aimed to prove.

\end{proof}

\section{Evolutes and involutes of fronts}\label{evoinvo}

Let $\gamma:I\rightarrow X$ be a smooth regular curve whose circular curvature $k$ does not vanish. Then the \emph{evolute} of $\gamma$ is the curve $e_{\gamma}:I\rightarrow X$ defined as
\begin{align*} e_{\gamma}(t) = \gamma(t) - \rho(t)\eta(t),
\end{align*}
where $\rho(t) := k(t)^{-1}$ is the curvature radius of $\gamma$ at $t \in I$ and $\eta(t)$ is the left normal vector to $\gamma$ at $t \in I$ (both defined as in our Introduction). A \emph{parallel} of $\gamma$ is a curve of the type
\begin{align}\label{parallel} \gamma_d(t) = \gamma(t) + d\eta(t),
\end{align}
for some fixed $d \in \mathbb{R}$. As in the Euclidean case, the singular points of the parallels of $\gamma$ sweep out the evolute of $\gamma$ (see \cite[Section 9]{Ba-Ma-Sho}). Based on this characterization, we will follow \cite{Fu-Ta2} to define the evolute of a front in a Minkowski plane.\\

First, let $(\gamma,\eta):I\rightarrow X$ be a Legendre immersion. Then, using the normal field $\eta$ we can define a \emph{parallel of the front} $\gamma$ exactly by (\ref{parallel}). 

\begin{lemma} A parallel of a front $\gamma:I\rightarrow X$ is also a front. 
\end{lemma}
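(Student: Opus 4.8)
The plan is to verify that the \emph{same} normal field $\eta$ witnessing that $\gamma$ is a front also witnesses that the parallel $\gamma_d$ is a front, so that no new normal field has to be constructed. First I would differentiate the defining equation (\ref{parallel}) and substitute the two Frenet-type relations (\ref{1stcurvature}) and (\ref{2ndcurvature}). Writing $\xi(t)=b(\eta(t))$ as usual, this yields
\begin{align*} \gamma_d'(t) = \gamma'(t) + d\eta'(t) = \alpha(t)\xi(t) + d\kappa(t)\xi(t) = \bigl(\alpha(t) + d\kappa(t)\bigr)\xi(t), \end{align*}
so $\gamma_d'(t)$ is always a scalar multiple of $\xi(t)$.

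Next I would check that $(\gamma_d,\eta)$ is a Legendre curve. Since $\eta(t) \dashv_B \xi(t)$ by the very definition of the map $b$, and Birkhoff orthogonality is unaffected by rescaling its second argument, the computation above gives immediately that $\eta(t) \dashv_B \gamma_d'(t)$ for every $t\in I$. Moreover, the displayed identity shows that the curvature pair of the Legendre curve $(\gamma_d,\eta)$ is simply $(\alpha + d\kappa,\kappa)$: the second curvature function is unchanged, while the first is shifted by $d\kappa$.

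It remains to promote this Legendre curve to a Legendre immersion, which is the only point that requires an argument. By definition $(\gamma_d,\eta)$ is an immersion precisely when $\gamma_d'(t)$ and $\eta'(t)$ do not vanish simultaneously, i.e. when the pair $(\alpha(t) + d\kappa(t),\kappa(t))$ never equals $(0,0)$. Suppose $\kappa(t_0) = 0$; then $\alpha(t_0) + d\kappa(t_0) = \alpha(t_0)$, and since $(\gamma,\eta)$ is by hypothesis a Legendre immersion, the pair $(\alpha(t_0),\kappa(t_0))$ cannot vanish, forcing $\alpha(t_0)\neq 0$, whence $\gamma_d'(t_0)\neq 0$. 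Conversely, if $\kappa(t_0)\neq 0$, then $\eta'(t_0)=\kappa(t_0)\xi(t_0)\neq 0$. In either case at least one of the two derivatives is nonzero, so $(\gamma_d,\eta)$ is a Legendre immersion and $\gamma_d$ is a front.

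I do not expect a genuine obstacle here: the essential content is the transformation rule $(\alpha,\kappa)\mapsto(\alpha + d\kappa,\kappa)$ for the curvature pair under forming a parallel, together with the observation that the immersion condition can only fail at a point where $\kappa$ vanishes, where it is rescued by $\alpha\neq 0$. The one place to stay careful is the homogeneity of Birkhoff orthogonality in its second slot, which is exactly what permits reusing the same $\eta$ even though the scalar factor $\alpha + d\kappa$ may vanish at isolated points.
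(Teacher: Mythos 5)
Your proposal is correct and follows essentially the same route as the paper: differentiate the parallel, substitute the Frenet-type formulas to get $\gamma_d'(t)=(\alpha(t)+d\kappa(t))\xi(t)$, note that $(\gamma_d,\eta)$ is then a Legendre curve with the same normal field, and observe that simultaneous vanishing of $\gamma_d'$ and $\eta'$ would force $\alpha$ and $\kappa$ to vanish together, contradicting that $(\gamma,\eta)$ is an immersion. Your case split on whether $\kappa(t_0)$ vanishes is just a slightly more explicit phrasing of the paper's one-line contradiction argument.
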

\begin{proof} Let $(\gamma,\eta)$ be a Legendre immersion. We shall see that $(\gamma_d,\eta)$ is a Legendre immersion. From (\ref{1stcurvature}) and (\ref{2ndcurvature}) we have $\gamma_d'(t) = \gamma'(t) + d\eta'(t) = (\alpha(t)+d\kappa(t))\xi(t)$. Therefore, $\eta(t) \dashv_B \gamma'_d(t)$ for each $t \in I$. It only remains to prove that $(\gamma_d,\eta)$ is an immersion. For this sake, just write down the equations
\begin{align}\label{parallelcurvature} \gamma'_d(t) = (\alpha(t)+d\kappa(t))\xi(t) \ \ \mathrm{and} \ \  \eta'(t) = \kappa(t)\xi(t),
\end{align}
and observe that $\gamma'_d$ and $\eta'$ vanish simultaneously if and only if $\alpha$ and $\kappa$ vanish simultaneously. But this would contradict the hypothesis that $(\gamma,\eta)$ is a Legendre immersion.  

\end{proof}

From now on we will \textbf{always} assume that $\gamma:I\rightarrow X$ is a front, and that the pair $(\gamma,\eta)$ is an associated Legendre immersion whose curvature pair $(\alpha,\kappa)$ is such that $\kappa$ does not vanish. Then, we define the \emph{evolute} of $\gamma$ to be
\begin{align}\label{evolute} e_{\gamma}(t) = \gamma(t) - \frac{\alpha(t)}{\kappa(t)}\eta(t), \ \ t \in I.
\end{align} 

Notice that this definition makes sense (as an extension of the usual evolute of a regular curve) in view of Lemma \ref{relcurv}. Also, observe that a front and its evolute intersect in (and only in) singular points of the front. Further in this direction, and as we have mentioned, the evolute of a front is the set of singular points of the parallels of this front. Now we will prove this.

\begin{prop}\label{propparallel} The set of points of the evolute of a front $\gamma$ is precisely the set of singular points of the parallels of $\gamma$. 
\end{prop}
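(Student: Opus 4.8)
The plan is to reduce everything to the single identity already recorded in (\ref{parallelcurvature}), namely $\gamma_d'(t) = (\alpha(t)+d\kappa(t))\xi(t)$. Since $\xi(t) \in S$ is a unit vector and hence never vanishes, this shows immediately that $t$ is a singular point of the parallel $\gamma_d$ if and only if $\alpha(t) + d\kappa(t) = 0$. The entire proof then amounts to solving this scalar equation for $d$ and comparing the resulting point with the value of the evolute at the same parameter. I read the phrase ``the set of singular points of the parallels'' as the union, over all $d \in \mathbb{R}$, of the singular loci of the individual curves $\gamma_d$, and I would prove set equality with the image of $e_\gamma$ by establishing the two inclusions.

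First I would show that every point of the evolute is a singular point of some parallel. Fixing $t_0 \in I$, I set $d_0 := -\alpha(t_0)/\kappa(t_0)$, which is well-defined precisely because $\kappa$ is assumed not to vanish. Then $\alpha(t_0) + d_0\kappa(t_0) = 0$, so $t_0$ is a singular point of $\gamma_{d_0}$ by the criterion above, and
\begin{align*} \gamma_{d_0}(t_0) = \gamma(t_0) + d_0\eta(t_0) = \gamma(t_0) - \frac{\alpha(t_0)}{\kappa(t_0)}\eta(t_0) = e_{\gamma}(t_0), \end{align*}
by the definition (\ref{evolute}) of the evolute. Hence $e_\gamma(t_0)$ is indeed a singular point of a parallel.

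Conversely, I would show that every singular point of every parallel lies on the evolute. Suppose $t_0$ is a singular point of $\gamma_d$ for some fixed $d \in \mathbb{R}$. By the criterion, $\alpha(t_0) + d\kappa(t_0) = 0$, and since $\kappa(t_0) \neq 0$ this forces $d = -\alpha(t_0)/\kappa(t_0)$. Substituting this value back gives
\begin{align*} \gamma_d(t_0) = \gamma(t_0) + d\eta(t_0) = \gamma(t_0) - \frac{\alpha(t_0)}{\kappa(t_0)}\eta(t_0) = e_{\gamma}(t_0), \end{align*}
so the singular point coincides with the evolute point at parameter $t_0$.

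The argument is essentially a one-line computation once (\ref{parallelcurvature}) is in hand, so there is no serious obstacle. The only point deserving care is the role of the standing hypothesis $\kappa \neq 0$: it guarantees that for each parameter $t_0$ there is exactly one value of $d$ (namely $-\alpha(t_0)/\kappa(t_0)$) producing a singularity there, which is what turns the correspondence between evolute points and singular points of parallels into a clean matching on parameters and makes both inclusions go through without ambiguity.
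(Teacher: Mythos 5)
Your proposal is correct and follows essentially the same route as the paper: both rest on the identity $\gamma_d'(t) = (\alpha(t)+d\kappa(t))\xi(t)$ and the observation that, since $\kappa$ never vanishes, $t$ is singular for $\gamma_d$ exactly when $d = -\alpha(t)/\kappa(t)$, which matches the definition of the evolute. You merely spell out the two inclusions more explicitly than the paper does.
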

\begin{proof} For each $t \in I$, the point $e_{\gamma}(t)$ belongs to the parallel given by $d = -\frac{\alpha(t)}{\kappa(t)}$. Since $\gamma'_d(t) = (\alpha(t)+d\kappa(t))\xi(t)$, it follows that $\gamma_d$ is singular at that point. On the other hand, a singular point of a parallel $\gamma_d$ must be given by some $t \in I$ such that $d = -\frac{\alpha(t)}{\kappa(t)}$. 

\end{proof}

We will verify that the evolute of a front is also a front, whose curvature can be obtained in terms of $(\alpha,\kappa)$. To do so, from now on we consider the map $b$ defined only for unit vectors. We do this because the restriction $b|_S:S\rightarrow S$ is bijective, and hence invertible. Let $(\gamma,\eta)$ be a Legendre immersion with associated curvature function given by $(\alpha,\kappa)$, and let $e_{\gamma}$ be its evolute. We will write $\nu(t) = -b^{-1}(\eta(t))$. With these notations, we have

\begin{teo} \label{teoevo}The pair $(e_{\gamma},\nu)$ is a Legendre immersion with associated curvature given by the equalities
\begin{align*} e_{\gamma}'(t) = -\frac{d}{dt}\left(\frac{\alpha(t)}{\kappa(t)}\right)\eta(t) \ \ and \\ \nu'(t) = \beta(t)\eta(t).
\end{align*}
Since $\eta(t) = -b(\nu(t))$, it follows that the curvature pair of $(e_{\gamma},\nu)$ is given by $\left(\frac{d}{dt}\left(\frac{\alpha}{\kappa}\right),-\beta\right)$. Moreover, the function $-\beta(t)$ is given by
\begin{align*} -\beta(t) = \frac{\kappa(t)}{\rho(t)},
\end{align*}
where $\rho(t):=\rho(\nu(t))$ is as defined in Remark \ref{diffb}.
\end{teo}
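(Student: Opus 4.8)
The plan is to treat $(e_\gamma,\nu)$ exactly as a new Legendre curve and read off its curvature pair, computing the three ingredients in the order $e_\gamma'$, then the tangent field, then $\nu'$. First I would differentiate the definition (\ref{evolute}) directly and substitute the structure equations (\ref{1stcurvature}) and (\ref{2ndcurvature}), i.e. $\gamma'=\alpha\xi$ and $\eta'=\kappa\xi$. The computation
\[
e_\gamma'(t)=\gamma'(t)-\tfrac{d}{dt}\!\left(\tfrac{\alpha}{\kappa}\right)\eta(t)-\tfrac{\alpha}{\kappa}\eta'(t)=\alpha\xi-\tfrac{d}{dt}\!\left(\tfrac{\alpha}{\kappa}\right)\eta-\alpha\xi
\]
collapses the two $\xi$-terms, leaving $e_\gamma'(t)=-\tfrac{d}{dt}\!\left(\tfrac{\alpha}{\kappa}\right)\eta(t)$, which is the first claimed equality and already shows that $e_\gamma'$ is everywhere parallel to $\eta$.

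Next I would pin down the tangent field of $(e_\gamma,\nu)$. The map $b$ is odd, i.e. $b(-w)=-b(w)$, which follows at once from its defining conditions and the uniqueness of Birkhoff orthogonality; hence from $\nu=-b^{-1}(\eta)$ one gets $b(\nu)=-\eta$, i.e. $\eta=-b(\nu)$, the relation recorded in the statement. In particular $\nu\dashv_B\eta$, and since Birkhoff orthogonality is unaffected by rescaling the second slot and $e_\gamma'$ is a scalar multiple of $\eta$, we obtain $\nu\dashv_B e_\gamma'$. Thus $(e_\gamma,\nu)$ is a Legendre curve with tangent field $\xi_e:=b(\nu)=-\eta$, and rewriting $e_\gamma'=-\tfrac{d}{dt}(\tfrac{\alpha}{\kappa})\eta=\tfrac{d}{dt}(\tfrac{\alpha}{\kappa})\xi_e$ identifies the first curvature function as $\tfrac{d}{dt}(\tfrac{\alpha}{\kappa})$.

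The core of the argument is computing $\nu'$. Since $\nu$ takes values in $S$, its derivative is tangent to $S$ at $\nu$, hence a multiple of $\xi_e=-\eta$; write $\nu'=\beta\eta$. To determine $\beta$ I would differentiate $b(\nu)=-\eta$, giving $\mathrm{D}b_{\nu}(\nu')=-\eta'=-\kappa\xi$, so $\beta\,\mathrm{D}b_{\nu}(\eta)=-\kappa\xi$. The delicate input is the value of $\mathrm{D}b_{\nu}(\eta)$: using $\eta=-b(\nu)$ together with the description of the differential of $b$ from Remark \ref{diffb} — namely that $\mathrm{D}b_{v}(b(v))$ equals $\rho(v)$ times the unit tangent $b(b(v))$ to $S$ at $b(v)$ — and the identity $b(b(\nu))=b(-\eta)=-\xi$, I would get $\mathrm{D}b_{\nu}(\eta)=-\mathrm{D}b_{\nu}(b(\nu))=\rho(\nu)\xi$. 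Substituting yields $\beta\rho(\nu)=-\kappa$, so $\beta=-\kappa/\rho(\nu)$ and therefore $-\beta=\kappa/\rho$, as claimed; combined with $\nu'=\kappa_e\xi_e$ this gives the second curvature function $\kappa_e=-\beta$.

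Finally, the immersion property is immediate once $\beta$ is known: since $\kappa$ does not vanish by our standing hypothesis and $\rho(\nu)=\|\varphi''\|>0$ by strict convexity of $S$, the derivative $\nu'=-\tfrac{\kappa}{\rho(\nu)}\eta$ never vanishes, so $e_\gamma'$ and $\nu'$ cannot vanish simultaneously and $(e_\gamma,\nu)$ is a Legendre immersion. I expect the main obstacle to be precisely the sign and direction bookkeeping in evaluating $\mathrm{D}b_{\nu}(\eta)$: unlike the Euclidean or even Radon case, here $b\circ b\neq -\mathrm{id}$, so $b(b(\nu))$ cannot be collapsed to $\pm\nu$, and the factor $\rho(\nu)$ must be introduced through the differential of $b$ exactly as in Remark \ref{diffb}. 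Keeping the orientation of the tangent indicatrix consistent (equivalently, the correct sign in the relation $\varphi''=\rho\,b(\xi)$) is where the care is required, and it is this sign that produces the minus in $-\beta=\kappa/\rho$.
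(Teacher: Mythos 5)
Your proof is correct and follows essentially the same route as the paper: differentiate (\ref{evolute}) to obtain $e_{\gamma}'=-\frac{d}{dt}\left(\frac{\alpha}{\kappa}\right)\eta$, note that $\nu'$ must be proportional to $\eta$, extract the factor $\rho(\nu)$ from the differential of $b$, and get the immersion property from $\kappa\neq 0$. The only (cosmetic) difference is that you differentiate the identity $b(\nu)=-\eta$ and work with $\mathrm{D}b_{\nu}$ directly, whereas the paper differentiates $\nu=-b^{-1}(\eta)$ and computes $\mathrm{D}b^{-1}_{\eta}$ by inverting $\mathrm{D}b$ via the chain rule; both hinge on the same relation $\mathrm{D}b_{v}(b(v))=\rho(v)\,b(b(v))$, whose sign you handle correctly (consistently with (\ref{3rdcurvature}) and with the paper's own proof, even though it disagrees with the sign as printed in (\ref{xideriv})).
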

\begin{proof} The first equation comes straightforwardly by differentiating (\ref{evolute}). For the second, first notice that since $\nu$ is a unit vector field, it follows that $\nu(t) \dashv_B \nu'(t)$, and therefore $\nu'(t)$ is parallel to $\eta(t)$. Notice that already this characterizes the pair $(e_{\gamma},\nu)$ as a Legendre curve. Before showing that this is indeed an immersion, we will prove the expression for $-\beta$. Differentiating $\nu$ yields\\
\begin{align*} \nu'(t) = -Db^{-1}_{\eta(t)}(\eta'(t)) = -\kappa(t)Db^{-1}_{\eta(t)}(b(\eta(t))) = -\kappa(t)\overline{\rho}(t)\eta(t),
\end{align*}
for some function $\overline{\rho}$. We will prove then that $\overline{\rho}(t) = \rho(\nu(t))^{-1}$. For this sake, let $v = \eta(t)$ and write down the equalities
\begin{align*} Db_{b^{-1}(v)}(v) = \rho(b^{-1}(v))b(v) \ \ \mathrm{and} \\ Db^{-1}_v(b(v)) = \overline{\rho}(v)v.
\end{align*}
Therefore,
\begin{align*}\overline{\rho}(v)v = Db^{-1}_v(b(v)) = \frac{1}{\rho(b^{-1}(v))}Db^{-1}_v(Db_{b^{-1}(v)}(v)) = \frac{1}{\rho(b^{-1}(v))}v,
\end{align*}
and since $\rho(b^{-1}(v)) = \rho(-\nu(t)) = \rho(\nu(t))$, the desired follows. Now, since by our hypothesis the function $\kappa$ does not vanish, it follows that $(e_{\gamma},\nu)$ is, in fact, a Legendre immersion. 

\end{proof}

\begin{remark} Unlike in the Euclidean subcase, here the function $\rho(t)$ appears. This function carries, somehow, the ``distortion" of the unit circle of the considered norm with respect to the Euclidean unit circle. Such function appears even in Radon planes.
\end{remark}

An evolute of a regular curve in the Euclidean plane is the envelope of its normal lines, and the same holds for the evolute of a regular curve in a Minkowski plane, of course when we replace inner product orthogonality by Birkhoff orthogonality (see \cite{craizer}). From (\ref{evolute}) and Theorem \ref{teoevo} it follows that the tangent line of the evolute $e_{\gamma}$ of a Legendre immersion $(\gamma,\eta)$ at $t \in I$ is precisely the normal line of $\gamma$ at $\gamma(t)$. Therefore, the evolute of a Legendre immersion can be regarded as the envelope of the normal line field of the immersion. \\

The family of normal lines of a Legendre immersion $(\gamma,\eta)$ is the zero set of the function $F:I\times X \rightarrow \mathbb{R}$ given by $F(t,v) = [\gamma(t) - v, \eta(t)]$. Indeed, for each fixed $t \in I$ the zero set of $F_t(v):=F(t,v)$ is the normal line of $\gamma$ at $\gamma(t)$. Therefore, we could expect that the points, for which both $F$ and its derivative with respect to $t$ vanish, describe the evolute of the Legendre immersion (see \cite{Bru-Gi}). This is indeed true, as we shall see next.

\begin{prop}\label{envevo} For the function $F:I\times X\rightarrow \mathbb{R}$ defined above we have that 
\begin{align*}F(t,v) = \frac{\partial F}{\partial t}(t,v) = 0 \  \mathit{if \ and \ only \ if} \  \ v = \gamma(t)- \frac{\alpha(t)}{\kappa(t)}\eta(t). 
\end{align*}
Therefore, the envelope of the normal line field of a Legendre immersion is precisely its evolute. 
\end{prop}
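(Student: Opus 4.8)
The plan is to translate the two simultaneous equations $F=0$ and $\partial F/\partial t = 0$ into conditions on a single scalar parameter and then solve for it. First I would exploit that the symplectic form $[\cdot,\cdot]$ is nondegenerate on the plane, so that in two dimensions $[w,\eta(t)] = 0$ holds precisely when $w$ is a scalar multiple of $\eta(t)$. Applying this to $w = \gamma(t)-v$, the equation $F(t,v) = [\gamma(t)-v,\eta(t)] = 0$ is equivalent to the existence of $\lambda \in \mathbb{R}$ with $v = \gamma(t) - \lambda\eta(t)$. Thus the entire content of the proposition reduces to showing that, along this one-parameter family, $\partial F/\partial t$ vanishes exactly when $\lambda = \alpha(t)/\kappa(t)$.

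Next I would differentiate $F$ with respect to $t$ by the Leibniz rule for the bilinear form, obtaining
\begin{align*} \frac{\partial F}{\partial t}(t,v) = [\gamma'(t),\eta(t)] + [\gamma(t)-v,\eta'(t)]. \end{align*}
Now I substitute the Frenet-type identities (\ref{1stcurvature}) and (\ref{2ndcurvature}), namely $\gamma'(t) = \alpha(t)\xi(t)$ and $\eta'(t) = \kappa(t)\xi(t)$, together with the relation $\gamma(t)-v = \lambda\eta(t)$ coming from $F = 0$. Using the antisymmetry $[\xi,\eta] = -[\eta,\xi]$, this yields
\begin{align*} \frac{\partial F}{\partial t}(t,v) = \alpha(t)[\xi(t),\eta(t)] + \lambda\kappa(t)[\eta(t),\xi(t)] = \bigl(\lambda\kappa(t) - \alpha(t)\bigr)[\eta(t),\xi(t)]. \end{align*}

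Finally I would invoke the definition of $\xi(t) = b(\eta(t))$, which guarantees $[\eta(t),\xi(t)] > 0$, so this coefficient never vanishes; since $\kappa$ is assumed nowhere zero by our standing hypothesis, the condition $\partial F/\partial t = 0$ forces $\lambda = \alpha(t)/\kappa(t)$, that is, $v = \gamma(t) - \frac{\alpha(t)}{\kappa(t)}\eta(t)$, which is the evolute by (\ref{evolute}); the converse is immediate upon substituting this value of $\lambda$ back, since then both $F$ and $\partial F/\partial t$ manifestly vanish. There is essentially no serious obstacle here — the only points requiring care are that $\partial F/\partial t$ must be evaluated on the locus $F = 0$ (so that the substitution $\gamma(t)-v = \lambda\eta(t)$ is legitimate), and that it is precisely the strict positivity $[\eta(t),\xi(t)] > 0$ coming from the definition of $b$, together with $\kappa \neq 0$, that makes the solution for $\lambda$ exist and be unique.
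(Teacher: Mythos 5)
Your proof is correct and follows essentially the same route as the paper: reduce $F=0$ to $\gamma(t)-v=\lambda\eta(t)$, differentiate via the Leibniz rule, substitute the Frenet-type relations, and use the nonvanishing of $[\eta(t),\xi(t)]$ together with $\kappa\neq 0$ to pin down $\lambda=\alpha(t)/\kappa(t)$. The only cosmetic difference is that you substitute $\gamma(t)-v=\lambda\eta(t)$ before collecting terms, whereas the paper keeps $[\gamma(t)-v,\xi(t)]$ until the final step; the content is identical.
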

\begin{proof} It is clear that $F(t,v) = 0$ if and only if $\gamma(t)- v = \lambda\eta(t)$ for some $\lambda \in \mathbb{R}$. Differentiating, we have
\begin{align*} \frac{\partial F}{\partial t}(t,v) = [\gamma'(t),\eta(t)] + [\gamma(t) - v,\eta'(t)] = \alpha(t)[\xi(t),\eta(t)] + \kappa(t)[\gamma(t)-v,\xi(t)].
\end{align*} 
Hence, $F(t,v) = \frac{\partial F}{\partial t}(t,v) = 0$ if and only if $\gamma(t) - v = \lambda \eta(t)$ and $(\alpha(t) - \lambda\kappa(t))[\xi(t),\eta(t)] = 0$. Since $[\xi(t),\eta(t)]$ does not vanish, the desired follows. 

\end{proof}

The involute of a regular curve $\gamma$ is a curve whose evolute is $\gamma$ (see \cite{Ba-Ma-Sho} and \cite{craizer}). We can easily extend this definition to our new context, in a way similar to as it was done for the Euclidean subcase in\cite{Fu-Ta4}. An \emph{involute} of a Legendre immersion $(\gamma,\eta):[0,c]\rightarrow X\times S$ whose curvature $\kappa$ does not vanish is a Legendre immersion whose evolute is $(\gamma,\eta)$. 

\begin{teo} Let $(\gamma,\eta):[0,c]\rightarrow X\times S$ be a Legendre immersion with curvature pair $(\alpha,\kappa)$, and assume that $\kappa$ does not vanish. For any $d \in \mathbb{R}$, the map $(\sigma,\xi):[0,c]\rightarrow X\times S$, where $\xi(t) = b(\eta(t))$, as usual, and
\begin{align}\label{involute}\sigma(t) = \gamma(0) - \int_0^t\left(\int_0^s\alpha(\tau)d\tau\right)\xi'(s)ds + d\xi(t)
\end{align}
is a Legendre immersion with curvature pair 
\begin{align}\label{involutecurv} \left(\kappa(t)\rho(\eta(t))\left(-d+\int_0^t\alpha(\tau)d\tau\right),-\kappa(t)\rho(\eta(t))\right):[0,c]\rightarrow \mathbb{R}^2,
\end{align}
and with $\rho$ defined as in \emph{(}\ref{xideriv}\emph{)}. Moreover, $(\sigma,\xi)$ is an involute of $(\gamma,\eta)$. 
\end{teo}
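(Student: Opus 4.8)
The plan is to verify the three assertions of the statement in turn: that $(\sigma,\xi)$ is a Legendre curve with normal field $\xi$; that it is in fact an immersion carrying the displayed curvature pair; and finally that its evolute recovers $(\gamma,\eta)$. The only tools needed are the Frenet-type relations (\ref{1stcurvature}) and (\ref{2ndcurvature}), the formula (\ref{xideriv}) for $\xi'$, the evolute formula (\ref{evolute}), and Theorem \ref{teoevo}.

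First I would differentiate (\ref{involute}). Since the integrand of the outer integral is $\left(\int_0^s\alpha(\tau)d\tau\right)\xi'(s)$, the fundamental theorem of calculus gives $\sigma'(t) = \left(d - \int_0^t\alpha(\tau)d\tau\right)\xi'(t)$. Substituting (\ref{xideriv}), namely $\xi'(t) = -\kappa(t)\rho(\eta(t))b(\xi(t))$, shows that $\sigma'(t)$ is a scalar multiple of $b(\xi(t))$. Because $\xi(t)\dashv_B b(\xi(t))$ by the very definition of $b$, this proves $\xi(t)\dashv_B\sigma'(t)$, so that $(\sigma,\xi)$ is a Legendre curve whose normal field is $\xi$ and whose tangent field is $b(\xi)$. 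Reading off the scalars in front of $b(\xi(t))$ in the two expressions $\sigma'(t)=\kappa(t)\rho(\eta(t))\left(-d+\int_0^t\alpha(\tau)d\tau\right)b(\xi(t))$ and $\xi'(t) = -\kappa(t)\rho(\eta(t))b(\xi(t))$ yields exactly the curvature pair (\ref{involutecurv}). Since $\kappa$ never vanishes and $\rho(\eta(t))>0$ (as $b|_S$ is a diffeomorphism, its differential does not kill the tangent direction $b(\eta)$), the derivative $\xi'$ never vanishes; hence $(\sigma,\xi)$ is automatically an immersion, independently of $\sigma'$.

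It then remains to show that $(\sigma,\xi)$ is an involute, i.e. that its evolute is $(\gamma,\eta)$. Because the second curvature $-\kappa\rho(\eta)$ of $(\sigma,\xi)$ does not vanish, its evolute is defined by (\ref{evolute}) with normal field $\xi$, so $e_\sigma(t) = \sigma(t) - \frac{\alpha_\sigma(t)}{\kappa_\sigma(t)}\xi(t)$. The ratio $\alpha_\sigma/\kappa_\sigma$ collapses to $d-\int_0^t\alpha(\tau)d\tau$, so the $d\xi(t)$ terms cancel and I am left with $e_\sigma(t) = \gamma(0) - \int_0^t\left(\int_0^s\alpha(\tau)d\tau\right)\xi'(s)ds + \left(\int_0^t\alpha(\tau)d\tau\right)\xi(t)$. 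A single integration by parts on the remaining integral, writing $A(s)=\int_0^s\alpha(\tau)d\tau$ with $A(0)=0$, turns $\int_0^t A(s)\xi'(s)ds$ into $A(t)\xi(t) - \int_0^t\alpha(s)\xi(s)ds$; the boundary term cancels the last summand, leaving $e_\sigma(t) = \gamma(0) + \int_0^t\alpha(s)\xi(s)ds = \gamma(0) + \int_0^t\gamma'(s)ds = \gamma(t)$, where I used $\gamma'=\alpha\xi$ from (\ref{1stcurvature}). For the normal field, Theorem \ref{teoevo} tells me that the evolute of $(\sigma,\xi)$ carries normal field $-b^{-1}(\xi) = -b^{-1}(b(\eta)) = -\eta$.

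The one point that requires comment — and which I regard as the main, though minor, obstacle — is exactly this sign: the evolute of $(\sigma,\xi)$ comes out literally as $(\gamma,-\eta)$ rather than $(\gamma,\eta)$. This is harmless, since $(\gamma,-\eta)$ and $(\gamma,\eta)$ describe the same front, the normal field being determined only up to orientation (as already noted in the discussion preceding the Maslov index). Hence $(\sigma,\xi)$ is indeed an involute of $(\gamma,\eta)$. Everything else is routine: the single genuine computation is the integration by parts, while the positivity $\rho>0$ together with $\kappa\neq 0$ is what simultaneously guarantees the immersion property and the well-definedness of the evolute.
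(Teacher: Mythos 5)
Your proof is correct and follows essentially the same route as the paper's: differentiate (\ref{involute}), substitute (\ref{xideriv}) to read off the curvature pair and the Legendre/immersion properties, then check that $e_\sigma=\gamma$ (the paper does this by noting $e_\sigma(0)=\gamma(0)$ and $e_\sigma'=\alpha\xi=\gamma'$, whereas you integrate by parts --- the two computations are equivalent). Your closing remark about the evolute's normal field coming out as $-b^{-1}(\xi)=-\eta$ rather than $\eta$ is a legitimate fine point that the paper's proof silently passes over, and your resolution (the normal field of a front is determined only up to sign) is the right one.
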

\begin{proof} First, differentiation of $\sigma$ yields
\begin{align*} \sigma'(t) = \left(d-\int_0^t\alpha(\tau)d\tau\right)\xi'(t) = \kappa(t)\rho(\eta(t))\left(-d+\int_0^t\alpha(\tau)d\tau\right)b(\xi(t)),
\end{align*}
where the last equality comes from (\ref{xideriv}). Notice that $\xi(t) \dashv \sigma'(t)$ for each $t \in [0,c]$, and hence the pair $(\sigma,\xi)$ is a Legendre curve. The derivative of the normal field $\xi$ is given by the equality (\ref{xideriv}), and then the curvature pair of $(\sigma,\xi)$ is precisely the one given in (\ref{involutecurv}). Since $\kappa(t)\rho(\eta(t))$ does not vanish, it follows that $(\sigma,\xi)$ is indeed an immersion. \\

It remains to show that the evolute of $(\sigma,\xi)$ is $(\gamma,\eta)$. From the definition, the evolute of $\sigma$ is the curve
\begin{align*} e_{\sigma}(t) = \sigma(t) + \left(-d+\int_0^t\alpha(\tau)d\tau\right)\xi(t), \ \ t \in [0,c].
\end{align*}
Notice that $e_{\sigma}(0) = \gamma(0)$. Therefore, it suffices to show that $e_{\sigma}$ and $\gamma$ have the same derivative. A simple calculation gives $e_{\sigma}'(t) = \alpha(t)\xi(t) = \gamma'(t)$, and this concludes the proof. 

\end{proof}

Observe that a front has a family of involutes (with parameter $d \in \mathbb{R}$), which is a family of parallel curves. In view of Proposition \ref{propparallel}, a front can be characterized as the set of singular points of these involutes. This remark is the reason for our slightly different approach in comparison with \cite{Fu-Ta4}. Also one would expect that this happens since any of the parallels has the same normal vector field, and therefore yields the same envelope (which is $\gamma$, in view of Proposition \ref{envevo}).

\section{Singular points and vertices of Legendre immersions}

A point where the derivative of the curvature of a regular curve vanishes is usually called a \emph{vertex}. We shall extend this definition to fronts in normed planes, in the same way as it was done in \cite{Fu-Ta2} for the Euclidean subcase. Let $(\gamma,\eta):[0,c]\rightarrow X\times S$ be a Legendre immersion with curvature pair $(\alpha,\kappa)$, and assume that $\kappa$ does not vanish. We say that $t_0 \in [0,c]$ is a \emph{vertex} of the front $\gamma$ (or of the associated Legendre immersion) if 
\begin{align*} \frac{d}{dt}\left(\frac{\alpha}{\kappa}\right)(t_0) = 0.
\end{align*}
Notice that, as in the regular case, a vertex of a front corresponds to a singular point of its evolute (and that the converse also holds, of course). A vertex which is a regular point of $\gamma$ is said to be a \emph{regular vertex}. As one will suspect, we can re-obtain the vertex in terms of the function $F$ which describes the normal line field of the front (see Proposition \ref{envevo}).

\begin{lemma} Let $(\gamma,\eta):I\rightarrow X\times S$ be a Legendre immersion, and let $F:I\times X\rightarrow\mathbb{R}$ be defined as $F(t,v) = [\gamma(t)-v,\eta(t)]$.    Therefore, $t_0 \in I$ is a vertex of $\gamma$ if and only if 
\begin{align*} \frac{\partial^2F}{\partial t^2}(e_{\gamma}(t_0),t_0) = 0.
\end{align*}

\end{lemma}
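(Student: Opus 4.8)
The plan is to compute the partial derivatives of $F$ with respect to $t$ directly, evaluate at the point $(e_\gamma(t_0), t_0)$ lying on the evolute, and show that the vanishing of $\partial^2 F/\partial t^2$ there is exactly equivalent to the vanishing of $\frac{d}{dt}(\alpha/\kappa)$ at $t_0$. The starting point is the computation already carried out in the proof of Proposition \ref{envevo}: writing $v = e_\gamma(t_0)$, we have $\gamma(t_0) - v = \frac{\alpha(t_0)}{\kappa(t_0)}\eta(t_0)$, so both $F$ and $\partial F/\partial t$ vanish at this point. The idea is that, having killed the zeroth- and first-order terms at the evolute point, the second derivative in $t$ isolates precisely the rate of change of the curvature radius $\alpha/\kappa$.

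First I would differentiate $F(t,v) = [\gamma(t)-v, \eta(t)]$ once more in $t$. Using $\gamma'(t) = \alpha(t)\xi(t)$ and $\eta'(t) = \kappa(t)\xi(t)$ from (\ref{1stcurvature}) and (\ref{2ndcurvature}), and recalling $[\xi,\xi] = 0$, one finds
\begin{align*}
\frac{\partial^2 F}{\partial t^2}(t,v) = [\gamma''(t),\eta(t)] + 2[\gamma'(t),\eta'(t)] + [\gamma(t)-v,\eta''(t)].
\end{align*}
The middle term $2[\gamma'(t),\eta'(t)] = 2\alpha(t)\kappa(t)[\xi(t),\xi(t)] = 0$ drops out. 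The next step is to substitute the evolute condition $v = e_\gamma(t_0)$, i.e. $\gamma(t_0)-v = \frac{\alpha(t_0)}{\kappa(t_0)}\eta(t_0)$, into the remaining two terms evaluated at $t=t_0$, and expand $\gamma''$ and $\eta''$ via the Frenet-type relations. Here one should differentiate $\eta' = \kappa\xi$ to get $\eta'' = \kappa'\xi + \kappa\xi'$, and likewise $\gamma'' = \alpha'\xi + \alpha\xi'$; the key structural fact is that $\xi' = \mathrm{D}b_{\eta}(\eta')$ is a multiple of $b(\xi)$ by (\ref{xideriv}), so the symplectic pairings $[\xi,\eta]$, $[\xi',\eta]$, and $[\xi,\xi']$ can all be expressed through $[\xi,\eta]$ and the distortion function $\rho$.

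The goal is to show that after these substitutions the expression factors as a nonzero quantity times $\frac{d}{dt}\left(\frac{\alpha}{\kappa}\right)(t_0)$. I expect the terms to collapse so that $\frac{\partial^2 F}{\partial t^2}(e_\gamma(t_0),t_0)$ equals $-\kappa(t_0)\,[\xi(t_0),\eta(t_0)]\,\frac{d}{dt}\!\left(\frac{\alpha}{\kappa}\right)(t_0)$ up to a computable nonvanishing prefactor; since $\kappa(t_0)\neq 0$ by hypothesis and $[\xi(t_0),\eta(t_0)]\neq 0$ (as $\xi$ and $\eta$ are Birkhoff-orthogonal unit vectors, hence linearly independent), the vanishing of the second derivative is equivalent to the vertex condition, which is exactly what the lemma asserts. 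The main obstacle I anticipate is bookkeeping with the non-Euclidean terms: unlike the Euclidean case, $\xi'$ is not simply $-\kappa\eta$ but carries the factor $\rho(\eta)$ from (\ref{xideriv}), so one must track this factor carefully through the pairings and verify that it appears as a common nonzero multiple that can be divided out rather than interfering with the clean factorization through $\frac{d}{dt}(\alpha/\kappa)$. Verifying that all the $\rho$-dependent cross terms either cancel or combine into the prefactor — rather than producing a genuine correction to the vertex condition — is the one place where the argument requires real care.
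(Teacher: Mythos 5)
Your proposal follows the paper's proof exactly: differentiate $F$ twice in $t$, note that the cross term $2[\gamma',\eta']$ vanishes, substitute $\gamma(t_0)-v=\frac{\alpha(t_0)}{\kappa(t_0)}\eta(t_0)$, and factor the result through $\frac{d}{dt}(\alpha/\kappa)(t_0)$. The one step you flag as delicate is in fact immediate: expanding $\gamma''=\alpha'\xi+\alpha\xi'$ and $\eta''=\kappa'\xi+\kappa\xi'$, the two $\xi'$-contributions are $\alpha[\xi',\eta]$ and $\frac{\alpha}{\kappa}\cdot\kappa[\eta,\xi']=\alpha[\eta,\xi']$, which cancel identically by antisymmetry of $[\cdot,\cdot]$, so $\rho$ never enters and the expression collapses directly to $[\eta,\xi]\left(\frac{\alpha\kappa'}{\kappa}-\alpha'\right)$.
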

\begin{proof} A simple calculation gives
\begin{align*} \frac{\partial^2F}{\partial t^2}(t,v) = [\gamma''(t),\eta(t)]+[\gamma(t)-v,\eta''(t)].
\end{align*}
Hence, in a point $(e_{\gamma}(t),t)$ we have
\begin{align*} \frac{\partial^2F}{\partial t^2}(e_{\gamma}(t),t) = [\gamma''(t),\eta(t)]+\left[\frac{\alpha(t)}{\kappa(t)}\eta(t),\eta''(t)\right] = [\eta(t),\xi(t)]\left(\frac{\alpha(t)}{\kappa(t)}\kappa'(t)-\alpha'(t)\right),
\end{align*}
and it is clear that the latter vanishes at $t_0 \in I$ if and only if $\frac{d}{dt}\left(\frac{\alpha}{\kappa}\right)(t_0) = 0$.

\end{proof}

The easy observation that the function $\alpha/\kappa$ must have a local extremum strictly between two consecutive singular points leads to the following version of the \emph{four vertex theorem}.

\begin{prop} Any of the following conditions is sufficient for a closed front $\gamma:S^1\rightarrow X$ to have at least four vertices: \\

\noindent\textbf{\emph{(a)}} $\gamma$ has at least four singular points,  \\

\noindent\textbf{\emph{(b)}} $\gamma$ has at least two singular points which are not ordinary cusps.
\end{prop}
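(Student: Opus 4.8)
The plan is to reduce everything to the behavior of the single smooth function $g(t) := \alpha(t)/\kappa(t)$, which is well-defined because $\kappa$ does not vanish. By definition the vertices of $\gamma$ are exactly the zeros of $g'$, while the singular points are exactly the zeros of $\alpha$ (since $\gamma'(t) = \alpha(t)\xi(t)$ with $\xi(t) \in S$), equivalently the zeros of $g$. Because $S^1$ is compact, I would work under the natural assumption that the singular points are isolated, hence finite; otherwise $g$ vanishes on an interval and produces infinitely many vertices, making the claim trivial. I will also use that $\kappa$, being continuous and nowhere zero on the connected set $S^1$, has constant sign, so $g$ and $\alpha$ share the same zero set and sign pattern.

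Two elementary observations carry the argument. First, a Rolle-type fact: between two consecutive singular points $t_i < t_{i+1}$ the function $g$ vanishes at both endpoints and is nonzero (hence of constant sign) on the open interval, so it attains a nonzero local extremum strictly inside, where $g'$ vanishes; thus each arc bounded by consecutive singular points contains a vertex in its interior. Second, a singular point $t_0$ that is not an ordinary cusp is automatically a vertex: by Lemma \ref{ordcusp} it satisfies $\alpha(t_0) = \alpha'(t_0) = 0$, and since $g' = (\alpha'\kappa - \alpha\kappa')/\kappa^2$, both terms in the numerator vanish at $t_0$, giving $g'(t_0) = 0$.

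For part (a), if $\gamma$ has $n \geq 4$ singular points they cut $S^1$ into $n$ arcs, and the first observation yields one interior vertex per arc; these lie in pairwise disjoint open arcs, so they are $n \geq 4$ distinct vertices. For part (b), let $s_1, s_2$ be two non-ordinary-cusp singular points, which are vertices by the second observation. They split $S^1$ into two arcs $A$ and $B$; applying the first observation to consecutive singular points inside each arc (or directly to $s_1, s_2$ when no singular point lies between them) produces vertices $w_A \in \mathrm{int}(A)$ and $w_B \in \mathrm{int}(B)$. Since $w_A, w_B$ sit in disjoint open arcs while $s_1, s_2$ are their endpoints, the four vertices $s_1, s_2, w_A, w_B$ are distinct.

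The only genuinely delicate parts are bookkeeping: ensuring the counted vertices are pairwise distinct (arranged by keeping the Rolle vertices strictly inside disjoint open arcs, away from the endpoint vertices $s_1, s_2$), and making the phrase \emph{consecutive singular points} legitimate, which is exactly where the finiteness of the singular set is needed. No analytic estimate is hard here; the entire content is the combination of the Rolle observation with the characterization of a non-cusp singularity as a common zero of $\alpha$ and $\alpha'$, and hence of $g'$.
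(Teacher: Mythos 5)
Your proposal is correct and follows essentially the same route as the paper: the Rolle-type observation that $\alpha/\kappa$ attains a local extremum strictly between consecutive singular points, combined with the fact (via Lemma \ref{ordcusp}) that a non-cusp singularity satisfies $\alpha(t_0)=\alpha'(t_0)=0$ and is therefore itself a vertex. Your version merely adds the (welcome) bookkeeping about isolatedness of singular points and distinctness of the counted vertices, which the paper leaves implicit.
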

\begin{proof} For \textbf{(a)}, notice that if $\gamma$ has at least four singular points, then $\alpha/\kappa$ has at least four local extrema, each of them corresponding to a vertex. For \textbf{(b)}, just notice that a singular point which is not an ordinary cusp is, in particular, a vertex. Indeed, the derivative
\begin{align*} \frac{d}{dt}\left(\frac{\alpha}{\kappa}\right)(t) =\frac{\alpha'(t)\kappa(t)-\alpha(t)\kappa'(t)}{\kappa(t)^2} 
\end{align*}
vanishes whenever $\alpha(t) = \alpha'(t) = 0$ (and this happens in a singular point which is not an ordinary cusp, see Lemma \ref{ordcusp}). In addition to these vertices, the existence of two regular vertices (guaranteed by the two singular points) finishes the argument.

\end{proof}

It is easy to see that a singular point of a Legendre curve in a Minkowski plane is still a singular point if we change the considered norm. Moreover, an ordinary cusp remains an ordinary cusp. However, a vertex of a Legendre curve may not be a vertex of it if we change the norm of the plane. Indeed, every point of a circle is a vertex (the circular curvature is constant), this is no longer the case when we change the norm. \\

But somehow we can still relate numbers of singular points to numbers of vertices. Since there is at least one vertex strictly between two consecutive singular points of a front, we have that the number of vertices of a closed front is greater than or equal to its number of singular points. Therefore, if $\Sigma(\gamma)$ and $V(\gamma)$ denote the set of singular points and the set of vertices of $\gamma$, respectively, and $\sigma$ is an involute of $\gamma$, then we have
\begin{align*} \#\Sigma(\sigma) \leq \#\Sigma(\gamma) \leq \#V(\gamma),
\end{align*}
where the first inequality comes from the observation that the vertices of $\sigma$ correspond to the singular points of $\gamma$, since $\gamma$ is the evolute of $\sigma$. This observation is proved for the Euclidean subcase in \cite{Fu-Ta4}, and what we wanted to show is that it only depends on the fact that there always exists at least one vertex between two consecutive singular points of a Legendre curve. \\

\section{Contact between Legendre curves}

The concept of \emph{contact} between regular plane curves intends, intuitively, to describe how ``si\-mi\-lar" two curves are in a neighborhood of a point. In \cite[Section 3]{Fu-Ta} this notion is extended to Legendre curves as follows: given $k \in \mathbb{N}$, two Legendre curves $(\gamma_1,\eta_1):I_1\rightarrow X\times S$ and $(\gamma_2,\eta_2):J\rightarrow X\times S$ are said to have \emph{k-th order contact} at $t = t_0$, $u = u_0$ if 
\begin{align*} \frac{d^j}{dt^j}(\gamma_1,\eta_1)(t_0) = \frac{d^j}{du^j}(\gamma_2,\eta_2)(u_0) \ \ \mathrm{for} \ \ j = 0,...,k-1 \ \ \mathrm{and}\\ \frac{d^k}{dt^k}(\gamma_1,\eta_1)(t_0) \neq \frac{d^k}{du^k}(\gamma_2,\eta_2)(u_0).
\end{align*}
If only the first condition holds, then we say that the curves have \emph{at least k-th order contact} at $t = t_0$ and $u = u_0$. In the mentioned paper, this was defined exactly in the same way, but considering that the normal vector field of each Legendre curve is the one given by the Euclidean orthogonality. We shall see that if two Legendre curves have $k$-th order contact for a given fixed norm, then they have $k$-th order contact for any norm. 

\begin{prop} Let $(\gamma,\eta)$ and $(\overline{\gamma},\overline{\eta})$ be Legendre curves which have $k$-th order contact at $t = t_0$ and $u = u_0$. Therefore, changing the norm of the plane, the new Legendre curves (derived in the same sense as discussed in the last paragraph of Section \ref{curvature}) still have $k$-th order contact at $t = t_0$ and $u = u_0$. 
\end{prop}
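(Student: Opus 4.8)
The plan is to exploit that changing the norm leaves the first components $\gamma,\overline\gamma$ entirely untouched and only replaces the normal fields by $T\circ\eta$ and $T\circ\overline\eta$, where $T:S_1\to S_2$ is the fixed diffeomorphism between the two unit circles constructed at the end of Section \ref{curvature}. Since $T$ depends only on the two norms and not on the curves, I would reduce the statement to a single claim: composition with the fixed map $T$ preserves the order of contact of the normal fields. Writing the contact order of a pair as the minimum of the contact orders of its two components, the proposition then follows by combining the invariance of the first component with this claim.

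To handle the normal fields, I would fix a smooth regular parametrization $\varphi$ of $S_1$ and write $\eta(t)=\varphi(u(t))$ and $\overline\eta(u)=\varphi(\overline u(u))$ for smooth scalar functions $u,\overline u$. Because $\varphi'$ never vanishes, the Fa\`a di Bruno formula expresses $\eta^{(j)}(t_0)$ as $\varphi'(u(t_0))\,u^{(j)}(t_0)$ plus a term depending only on $u',\dots,u^{(j-1)}$ and on the derivatives of $\varphi$ at $u(t_0)$; the resulting correspondence between the jet of $\eta$ and the jet of $u$ is triangular with nonzero diagonal entries $\varphi'(u(t_0))$, hence invertible. Since $\eta(t_0)=\overline\eta(u_0)$ forces $u(t_0)\equiv\overline u(u_0)$ modulo the period of $\varphi$, both curves are fed the same jet of $\varphi$ at the base point, and an induction on $j$ yields $\eta^{(j)}(t_0)=\overline\eta^{(j)}(u_0)$ if and only if $u^{(j)}(t_0)=\overline u^{(j)}(u_0)$. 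Hence $\eta$ and $\overline\eta$ have exactly the same contact order as the scalar functions $u$ and $\overline u$.

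Next I would set $\psi:=T\circ\varphi$, which is again a smooth regular (and periodic) parametrization of $S_2$, so that $T\circ\eta=\psi\circ u$ and $T\circ\overline\eta=\psi\circ\overline u$. As $\psi'$ is nonvanishing, the identical argument shows that $T\circ\eta$ and $T\circ\overline\eta$ have the same contact order as $u$ and $\overline u$, and therefore the same contact order as $\eta$ and $\overline\eta$. Combining this with the invariance of the first component, the contact order of $(\gamma,T\eta)$ with $(\overline\gamma,T\overline\eta)$ equals the minimum of the unchanged order of the first components and the preserved order of the normal fields, which is exactly $k$; this is the desired conclusion.

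The point that I expect to require the most care is that $T$ is defined only on $S_1$, so the higher derivatives of $T\circ\eta$ cannot be obtained by merely applying a differential $DT$ to $\eta^{(j)}$, since for $j\ge 2$ these vectors leave the tangent line of $S_1$. The scalar-reparametrization device above is precisely what circumvents this obstacle: it replaces $T$ by composition with the genuine smooth embedding $\psi=T\circ\varphi$ of a real interval, after which all derivatives are standard chain-rule (Fa\`a di Bruno) expressions, and the matching of base points guarantees that both curves see the same jet of $\psi$. The invertibility of the jet correspondences, guaranteed by $\varphi'\neq 0$ and $\psi'\neq 0$, is what transfers the statement ``agree (resp.\ differ) at order $j$'' from the curves to the scalar functions and back.
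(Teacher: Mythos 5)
Your proof is correct, and its overall reduction is the same as the paper's: the first components are untouched by the change of norm, so everything hinges on showing that postcomposition with the fixed diffeomorphism $T:S_1\rightarrow S_2$ preserves the order of contact of the normal fields. Where you differ is in how that key step is executed. The paper writes $\nu=h\circ\eta$ and asserts that $\nu^{(m)}(t_0)$ is obtained by applying an iterated differential $D^mh$, based at the lower-order jet of $\eta$, to $\eta^{(m)}(t_0)$ --- a formula whose meaning is delicate precisely for the reason you flag: for $j\geq 2$ the vectors $\eta^{(j)}(t_0)$ leave the tangent line of $S_1$, so one cannot literally differentiate $h$ along them. Your device of lifting $\eta=\varphi\circ u$ and $\overline{\eta}=\varphi\circ\overline{u}$ through a fixed regular periodic parametrization $\varphi$ of $S_1$, invoking Fa\`a di Bruno to get a triangular, invertible correspondence between the jet of $\eta$ and the jet of the scalar $u$ (with nonzero ``diagonal'' $\varphi'(u(t_0))$), and then repeating the argument with $\psi=T\circ\varphi$ --- which is again regular because $T$ is a diffeomorphism under the standing smoothness and strict convexity hypotheses --- replaces the paper's informal higher differentials by ordinary one-variable chain-rule computations. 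This buys a fully rigorous two-way implication (jets of $\eta,\overline{\eta}$ agree to order $j$ iff jets of $u,\overline{u}$ do, iff jets of $T\eta,T\overline{\eta}$ do), which in particular gives the ``only if'' direction that the paper dispatches with the one-line remark that $h$ is an immersion; your only obligations beyond the paper's are the (easily discharged) points that the lifts exist and that the base points $u(t_0)$ and $\overline{u}(u_0)$ can be chosen equal rather than merely congruent modulo the period.
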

\begin{proof} Assume that $||\cdot||_1$ and $||\cdot||_2$ are smooth and strictly convex norms in the plane with unit circles $S_1$ and $S_2$, respectively. Denote by $h:S_1\rightarrow S_2$ the map introduced in the last paragraph of Section \ref{curvature}, which takes each vector $v \in S_1$ to the vector $h(v) \in S_2$ such that $S_2$ is supported at $h(v)$ by the same direction which supports $S_1$ at $v$. \\  

Let $(\gamma,\eta)$ be a Legendre curve in the norm $||\cdot||_1$. Then $(\gamma,h\circ\eta)$ is a Legendre curve in the norm $||\cdot||_2$. Writing $\nu = h\circ\eta$, the strategy is to prove that, for any $m \in \mathbb{N}$, the $m$-th derivative of $\nu$ at $t = t_0$ only depends on $h$ and $\eta^{(j)}(t_0)$ for $j = 0,...,m$. For this sake, we just notice that
\begin{align*} \nu^ {(m)}(t_0) = D^mh_{\left(\eta(t_0),\eta' (t_0),...,\eta^ {(m-1)}(t_0)\right)}\left(\eta^{(m)}(t_0)\right),
\end{align*}
where $D^mh_{\left(\eta(t_0),\eta' (t_0),...\eta^ {(m-1)}(t_0)\right)}$ is the usual $m$-th derivative of the map $h$, which is a linear map defined over $T_{\eta^{(m-1)}(t_0)}\left(...\left(T_{\eta'(t_0)}\left(T_{\eta(t_0)}S_1\right)\right)\right)$. Since this derivative clearly depends only on $h$ and $\eta^{(j)}(t_0)$ for $j = 0,...,m$, it follows that $\nu^{(m)}(t_0)$ also only depends on it. \\

Hence, if a change of the norm carries over the normal fields $\eta$ and $\overline{\eta}$ to $\nu$ and $\overline{\nu}$, respectively, then we have that $\nu^{(j)}(t_0) = \overline{\nu}^{(j)}(u_0)$ for every $j = 0,...,k$ if and only if the same happens for $\eta$ and $\overline{\eta}$ (the ``only if" part comes from $h$ being an immersion for all $v \in S_1$). 

\end{proof}

It is a well known fact that the contact between two regular curves can be characterized by means of their curvatures. In \cite[Theorem 3.1]{Fu-Ta} this is extended to Legendre curves using the developed curvature functions. We shall verify that we can obtain an analogous result (in one of the directions, only) when we are not working in the Euclidean subcase. 

\begin{teo} Let $(\gamma_1,\eta_1):I_1\rightarrow X\times S$ and $(\gamma_2,\eta_2):I_2\rightarrow X\times S$ be Legendre curves with curvature pairs $(\alpha_1,\kappa_1)$ and $(\alpha_2,\kappa_2)$, respectively. If these curves have at least k-th order contact at $t = t_0$ and $u = u_0$, then
\begin{align*} \frac{d^j}{dt^j}(\alpha_1,\kappa_1)(t_0) = \frac{d^j}{du^j}(\alpha_2,\kappa_2)(u_0), \ \ \mathrm{for} \ \ j = 0,...,k-1.
\end{align*}
However, the converse may not be true (even up to isometry) if the norm is not Euclidean.
\end{teo}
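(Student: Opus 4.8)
The plan is to establish the forward implication by writing the curvature pair as an explicit function of the jet of $(\gamma,\eta)$ and then differentiating, and to refute the converse with a concrete pair of curves that exploits the scarcity of isometries of a non-Euclidean plane. For the first part I would record closed expressions for $\alpha$ and $\kappa$: pairing (\ref{1stcurvature}) and (\ref{2ndcurvature}) with $\eta$ through the symplectic form and using that $[\eta,b(\eta)]>0$ never vanishes gives
\begin{align*} \alpha(t)=\frac{[\eta(t),\gamma'(t)]}{[\eta(t),b(\eta(t))]} \qquad\text{and}\qquad \kappa(t)=\frac{[\eta(t),\eta'(t)]}{[\eta(t),b(\eta(t))]}. \end{align*}
Thus $(\alpha,\kappa)$ arises from $(\gamma,\eta)$ by the fixed operations of differentiation, composition with the smooth map $b$, the bilinear pairing $[\cdot,\cdot]$, and division by a nowhere-vanishing smooth function; in particular $\xi=b\circ\eta$ and all its derivatives at a point depend, by the chain rule, only on the equal-order derivatives of $\eta$ there.

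To transfer the contact I would induct on $j$ using (\ref{1stcurvature}) and (\ref{2ndcurvature}) directly rather than the quotient formula. Differentiating $\gamma'=\alpha\xi$ yields
\begin{align*} \gamma^{(j+1)}(t)=\alpha^{(j)}(t)\,\xi(t)+\sum_{i=0}^{j-1}\binom{j}{i}\alpha^{(i)}(t)\,\xi^{(j-i)}(t), \end{align*}
and likewise $\eta^{(j+1)}=\kappa^{(j)}\xi+\sum_{i<j}\binom{j}{i}\kappa^{(i)}\xi^{(j-i)}$. Under the contact hypothesis the derivatives of $\gamma$, $\eta$ and hence of $\xi$ coincide at $t_0$ and $u_0$ up to the relevant order, and $\xi\neq 0$; assuming inductively that the lower-order $\alpha^{(i)}$ (resp. $\kappa^{(i)}$) already agree, every summand except the leading $\alpha^{(j)}\xi$ (resp. $\kappa^{(j)}\xi$) matches, forcing the top coefficient to agree as well. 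This produces the asserted equalities for $j=0,\dots,k-1$; the only bookkeeping subtlety is that reading off $\alpha^{(j)}$ consumes $\gamma^{(j+1)}$, so the contact data must be fed in one order at a time.

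For the converse I would show that the failure stems from the non-homogeneity of the unit circle. When the norm is not Euclidean its group of linear isometries cannot contain $SO(2)$ — otherwise the norm would be rotation invariant, hence Euclidean — so it is a finite subgroup of $O(2)$ and acts on $S$ with finite orbits; in particular there exist points of $S$ in distinct orbits. Writing $\varphi$ for an arc-length parametrization of $S$, for each shift $u_0$ the pair $(\varphi(u_0+\,\cdot\,),\varphi(u_0+\,\cdot\,))$ is a Legendre immersion tracing $S$, for which $\xi=\varphi'(u_0+\,\cdot\,)=b(\varphi(u_0+\,\cdot\,))$ and therefore $\alpha\equiv\kappa\equiv 1$. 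Choosing $u_1,u_2$ so that $\varphi(u_1)$ and $\varphi(u_2)$ lie in different orbits, the immersions $(\gamma_i,\eta_i)=(\varphi(u_i+\,\cdot\,),\varphi(u_i+\,\cdot\,))$ have identical curvature pairs — so all their curvature jets coincide — yet no linear isometry carries the germ of one onto the other, so they cannot be brought into contact of any order by an isometry. This refutes the converse in every non-Euclidean plane, and it makes transparent what rescues it in the Euclidean case, namely the transitivity of $SO(2)$ on the circle.

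I expect the forward direction to be essentially routine once the closed formulas are in hand. The genuinely delicate point is the converse, where the substance lies in recognizing that the obstruction is the finiteness of the isometry group (equivalently, the non-homogeneity of $S$) and in packaging it into an explicit pair of curves with coincident curvature pairs.
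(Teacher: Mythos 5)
Your argument is correct and follows essentially the same route as the paper: the forward implication is the same Leibniz-formula induction on the order of differentiation (and you rightly flag the order-bookkeeping, that $\alpha^{(j)}$ is read off from $\gamma^{(j+1)}$, which the paper glosses over), and your counterexample to the converse --- two arc-length arcs of the unit circle based at points of $S$ not related by any isometry, each with curvature pair identically $(1,1)$ --- is exactly the paper's construction. The only difference is that where the paper cites \cite[Proposition 7.1]{Ba-Ma-Sho} for the existence of two non-isometric arcs, you derive it yourself from the finiteness of the linear isometry group of a non-Euclidean normed plane; both justifications are valid, yours being self-contained.
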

\begin{proof} The proof is essentially the same as in the mentioned theorem in \cite{Fu-Ta}. Derivating (\ref{1stcurvature}) and (\ref{2ndcurvature}), we have the equalities
\begin{align*} \gamma^{(k)}(t) = \sum_{j=0}^k\binom{k}{j}\alpha^{(j)}(t)\xi^{(k-j)}(t) \ \ \mathrm{and} \\
\eta^{(k)}(t)=\sum_{j=0}^k\binom{k}{j}\kappa^{(j)}(t)\xi^{(k-j)}(t).
\end{align*}
If $k = 1$, then we have $\alpha_1'(t_0)\xi_1(t_0) = \alpha_2'(u_0)\xi_2(u_0)$ and $\kappa_1(t_0)\xi_1(t_0) = \kappa_2(u_0)\xi_2(u_0)$. Since $\eta_1(t_0) = \eta_2(u_0)$, it follows that $\xi_1(t_0) = \xi_2(u_0)$, and hence we have $\alpha_1(t_0) = \alpha_2(u_0)$ and $\kappa_1(t_0) = \kappa_2(t_0)$. Regarding higher order contact, one just has to procceed inductively by using the previous differentiation formulas. \\

We illustrate the fact that the converse does not necessarily hold if the norm is not Euclidean with a constructive example. Take two disjoint arcs $\gamma_1$ and $\gamma_2$ in the unit circle which do not overlap under an isometry (the existence of such arcs is guaranteed by \cite[Proposition 7.1]{Ba-Ma-Sho}). Assume that these arcs are parametrized by arc-length and choose parameters $t_0$, $u_0$ such that the supporting directions to $\gamma_1(t_0)$ and $\gamma_2(u_0)$ are distinct. These arcs, together with their respective normal vector fields ($\eta_1$ and $\eta_2$, say), are Legendre curves whose curvatures and derivatives of curvatures coincide. However, there is no isometry carrying $\gamma_1(t_0)$ to $\gamma_2(u_0)$ and $\eta_1(t_0)$ to $\eta_2(u_0)$, and hence these curves do not have contact of any order up to isometry. 

\end{proof}

\section{Pedal curves of frontals}

A \emph{pedal curve} of a regular curve $\gamma$ is usually defined to be the locus of the orthogonal projections of a fixed point $p$ to the tangent lines of $\gamma$. The existence of a tangent field allows us to carry over this definition straightforwardly to frontals. 

\begin{definition} Let $(\gamma,\eta):I\rightarrow X\times S$ be a Legendre curve, and let $\xi(t) = b(\eta(t))$, as usual. Fix a point $p \in X$. The \emph{pedal curve} of the frontal $\gamma$ with respect to $p$ is the curve $\gamma_p:I\rightarrow X$ which associates to each $t \in I$ the unique point $\gamma_p(t)$ of the line $s\mapsto \gamma(t)+s\xi(t)$ such that $\gamma_p(t)-p \dashv_B \xi(t)$ (see Figure \ref{pedalfig}). In other words, $\gamma_p(t)$ is the intersection of the parallel to $\eta(t)$ drawn through $p$ with the tangent line of $\gamma$ at $\gamma(t)$. 
\end{definition}

\begin{figure}[h]
\centering
\includegraphics{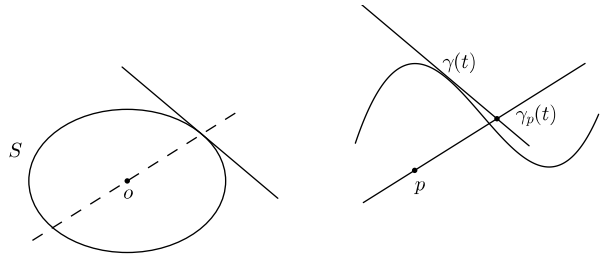}
\caption{Constructing a pedal curve.}
\label{pedalfig}
\end{figure}

It is useful, however, to have a formula for the pedal curve which we can work with. For this sake, fix $t \in I$ and let $\alpha, \beta \in \mathbb{R}$ be constants such that $\gamma_p(t) = \gamma(t)+\alpha\xi(t)$ and $p-\gamma(t) = \beta\eta(t)$. From the vectorial sum $\alpha\xi(t) + \beta\eta(t) = p - \gamma(t)$ we have
\begin{align*} \alpha[\xi(t),\eta(t)] = [p-\gamma(t),\eta(t)].
\end{align*}
Since $\eta(t) \dashv_B \xi(t)$ and the basis $\{\eta(t),\xi(t)\}$ is positively oriented, the above equality reads $\alpha||\xi(t)||_a = [\gamma(t)-p,\eta(t)]$. Hence,
\begin{align}\label{pedal} \gamma_p(t) = \gamma(t) + [\gamma(t) - p,\eta(t)]\xi_a(t),
\end{align}
where $\xi_a(t) = \frac{\xi(t)}{||\xi(t)||_a}$ is the vector $\xi$ normalized in the anti-norm. \\

Notice that from our geometric definition it follows that a pedal curve of a frontal with respect to a given point does not depend on the parametrization of the frontal. We give an illustrated example of a pedal curve of a regular curve.\\

\begin{example}\label{examp} Consider the space $\mathbb{R}^2$ endowed with the usual $l_p$ norm for some $1 < p < +\infty$, and let $q \in \mathbb{R}$ be such that $1/p + 1/q = 1$.  A simple calculation shows that the right pedal curve of the unit circle with respect to the point $(0,1)$ is obtained by joining the curve
\begin{align*} \sigma(t) = \left\{\begin{array}{ll}\left(t^{1/p} - t^{1/p}(1-t)^{1/q},t+(1-t)^{1/p}\right), \ t \in [0,1] \\\left((2-t)^{1/p}+(2-t)^{1/p}(t-1)^{1/q},2 - t - (t-1)^{1/p}\right),\ t \in [1,2] \end{array}\right.
\end{align*}
with its reflection through the $y$-axis. Figure \ref{lpnormrightpedal} illustrates the case $p = 3$.\\
\end{example}

\begin{figure}[h]
\centering
\includegraphics[scale=1.3]{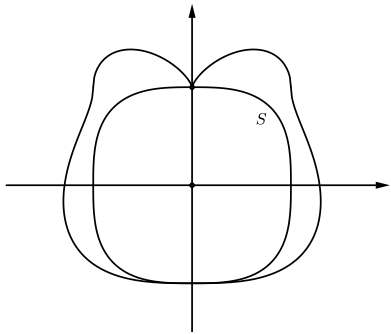}
\caption{The unit circle of $l_3$ and its pedal curve with respect to the point $(0,1)$.}
\label{lpnormrightpedal}
\end{figure}

As an interesting property of pedal curves, we will prove that a frontal can be regarded as the envelope of a certain family of lines defined by (any) one of its pedal curves.

\begin{prop} Let $\gamma_p$ be a pedal curve of a frontal $\gamma$. Then $\gamma$ is the envelope of the family of lines
\begin{align*} \left\{l_t:s \mapsto \gamma_p(t)+sb(\gamma_p(t)-p)\right\}_{t\in I},
\end{align*}
where $l_{t_0}$ is defined by taking limits if $p = \gamma_p(t_0)$ for some $t_0\in I$. Similarly as in Proposition \ref{envevo}, if $F:I\times X\rightarrow \mathbb{R}$ is given by $F(t,v) = [\gamma_p(t)-v,b(\gamma_p(t)-p)]$, then 
\begin{align*} F(t,v) = \frac{\partial F}{\partial t}(t,v) = 0 \ \ \mathit{if \ and \ only \ if} \ v = \gamma(t).
\end{align*}
In particular, any frontal is a pedal curve of some curve in the plane.
\end{prop}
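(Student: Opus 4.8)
The plan is to reduce everything to the familiar fact that a curve is the envelope of its own tangent lines, after checking that each line $l_t$ is precisely the tangent line of $\gamma$ at $\gamma(t)$. Writing $g(t):=[\gamma(t)-p,\eta(t)]$, formula (\ref{pedal}) becomes $\gamma_p(t)=\gamma(t)+g(t)\xi_a(t)$, so that $\gamma_p(t)-\gamma(t)$ is a scalar multiple of $\xi(t)$ and, in particular, $\gamma(t)$ lies on $l_t$. From the defining construction of the pedal curve the vector $\gamma_p(t)-p$ is a multiple of $\eta(t)$, and hence $b(\gamma_p(t)-p)=\pm\xi(t)$, the sign being locally constant away from the parameters with $\gamma_p(t)=p$ (which is exactly where the limit convention for $l_{t_0}$ is needed). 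Thus $l_t$ passes through $\gamma(t)$ with direction $\pm\xi(t)$, i.e. $l_t$ is the tangent line of $\gamma$ at $\gamma(t)$.

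Next, mirroring Proposition \ref{envevo} and the standard envelope theory (cf. \cite{Bru-Gi}), I would work with $G(t,v):=[\gamma_p(t)-v,\xi(t)]$, which off the exceptional parameters differs from $F$ only by the locally constant sign above; consequently $F$ and $G$ have the same common zero sets for the respective pairs $(F,\partial_tF)$ and $(G,\partial_tG)$. The equation $G(t,v)=0$ says that $v$ lies on $l_t$, i.e. $v=\gamma(t)+\lambda\xi(t)$ for some $\lambda\in\mathbb{R}$. Differentiating, and using $[\xi,\xi]=0$, $\gamma'=\alpha\xi$ from (\ref{1stcurvature}), and $\xi_a=\xi/\|\xi\|_a$, a short computation gives $[\gamma_p'(t),\xi(t)]=\tfrac{g(t)}{\|\xi(t)\|_a}[\xi'(t),\xi(t)]$, whereas $\gamma_p(t)-v=\bigl(\tfrac{g(t)}{\|\xi(t)\|_a}-\lambda\bigr)\xi(t)$ forces $[\gamma_p(t)-v,\xi'(t)]=-\bigl(\tfrac{g(t)}{\|\xi(t)\|_a}-\lambda\bigr)[\xi'(t),\xi(t)]$. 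Adding the two, the $g$-terms cancel and I obtain the clean identity $\partial_tG(t,v)=\lambda\,[\xi'(t),\xi(t)]$.

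By (\ref{xideriv}) one has $[\xi'(t),\xi(t)]=\kappa(t)\rho(\eta(t))[\xi(t),b(\xi(t))]$, and since $\rho>0$ and $[\xi(t),b(\xi(t))]>0$ this never vanishes as long as $\kappa(t)\neq0$. Hence $\partial_tG(t,v)=0$ forces $\lambda=0$, that is $v=\gamma(t)$; conversely $\lambda=0$ annihilates both $G$ and $\partial_tG$ irrespective of $\kappa$, which yields the ``if'' direction unconditionally. The step I expect to be the main obstacle is exactly this last degeneracy: at a parameter with $\kappa(t)=0$ the tangent direction is momentarily stationary ($\xi'(t)=0$), the entire line $l_t$ then solves $F=\partial_tF=0$, and the ``only if'' part has to be read for fronts whose curvature $\kappa$ does not vanish; the sign bookkeeping for $b(\gamma_p(t)-p)$ at parameters with $\gamma_p(t)=p$ is the second delicate point, dealt with by the stated limiting definition of $l_{t_0}$.

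For the closing assertion I would argue directly instead of through $F$. Given an arbitrary frontal $\gamma$ with normal field $\eta$ and any fixed $p$, let $\delta$ be the envelope of the family of lines $\{s\mapsto\gamma(t)+s\,b(\gamma(t)-p)\}_{t}$, which exists and is a frontal by the computation above applied with $\gamma$ in the role of $\gamma_p$. The tangent line of $\delta$ at each of its points is one of these lines, namely the line through $\gamma(t)$ with direction $b(\gamma(t)-p)$; since $(\gamma(t)-p)\dashv_B b(\gamma(t)-p)$ holds by the very definition of the map $b$, the point $\gamma(t)$ is the foot of the Birkhoff-perpendicular dropped from $p$ onto that tangent line. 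By the definition of the pedal curve this says precisely that $\gamma$ is the pedal curve of $\delta$ with respect to $p$, so every frontal is the pedal curve of some curve in the plane.
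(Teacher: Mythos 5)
Your proof is correct and follows the same basic route as the paper's: both verify the envelope condition by differentiating the pedal formula (\ref{pedal}) and using $[\xi_a'(t),\xi(t)]=[\xi'(t),\xi(t)]/\|\xi(t)\|_a$. Two differences are worth recording. First, by writing the candidate points as $v=\gamma(t)+\lambda\xi(t)$ you collapse the computation to the identity $\frac{\partial G}{\partial t}(t,v)=\lambda[\xi'(t),\xi(t)]$, whereas the paper solves for the coefficient $\alpha$ in $\gamma_p(t)-v=\alpha\xi(t)$; the two are equivalent, but your form makes explicit a step the paper passes over silently: concluding $\lambda=0$ (or, in the paper's version, $\alpha[\xi,\eta]=[p-\gamma,\eta]$) requires dividing by $[\xi(t),\xi'(t)]=\kappa(t)\rho(\eta(t))[\xi(t),b(\xi(t))]$, so the ``only if'' half of the displayed equivalence genuinely degenerates at parameters with $\kappa(t)=0$, where the entire line $l_t$ satisfies $F=\frac{\partial F}{\partial t}=0$. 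You are right to flag this; as stated the proposition needs either the hypothesis that $\kappa$ does not vanish or a caveat at inflection points, and your proof is more honest than the paper's on this point. Second, you supply an explicit argument for the closing assertion that every frontal is a pedal curve, which the paper leaves as an unexplained ``in particular''; your argument --- form the envelope $\delta$ of the lines through $\gamma(t)$ with direction $b(\gamma(t)-p)$ and note that $\gamma(t)$ is, by $(\gamma(t)-p)\dashv_B b(\gamma(t)-p)$ and left-uniqueness of Birkhoff orthogonality, the pedal foot of $p$ on the tangent line of $\delta$ --- is the intended one and is correct, subject to the same nondegeneracy needed for the envelope to exist as a frontal. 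Your sign bookkeeping for $b(\gamma_p(t)-p)=\pm\xi(t)$ away from $\gamma_p(t)=p$ matches the paper's ``without loss of generality'' normalization.
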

\begin{proof} We may assume, without loss of generality, that locally $b(\gamma_p(t) - p) = \xi(t)$. Derivating $F$ and applying (\ref{pedal}) yields
\begin{align*} \frac{\partial F}{\partial t}(t,v) = [\gamma_p'(t),\xi(t)] + [\gamma_p(t)-v,\xi'(t)] = \\ = [\gamma(t)-p,\eta(t)]\cdot[\xi_a'(t),\xi(t)] + [\gamma_p(t)-v,\xi'(t)].
\end{align*}
 Assume that $F(t,v) = \frac{\partial F}{\partial t}(t,v) =0$. From $F(t,v) = 0$ we have that $\gamma_p(t) - v = \alpha\xi(t)$ for some $\alpha \in \mathbb{R}$. Due to the other equality and to the above calculation, we get
\begin{align*} 0 = [\gamma(t)-p,\eta(t)]\cdot[\xi_a'(t),\xi(t)] + \alpha[\xi(t),\xi'(t)].
\end{align*}
From the definition of $\xi_a(t)$ it follows that
\begin{align*} [\xi_a'(t),\xi(t)] = \frac{[\xi(t),\xi'(t)]}{[\xi(t),\eta(t)]},
\end{align*}
and substituting this in the previous equality yields immediately the equality $\alpha[\xi(t),\eta(t)] = [p-\gamma(t),\eta(t)]$. Therefore, from (\ref{pedal}) we have $v = \gamma(t)$. The converse is straightforward.

\end{proof}

As usual, we assume that $\gamma:I\rightarrow X$ is a frontal with associated normal field $\eta$ whose curvature is $(\alpha,\kappa)$, and we also assume that $\xi$ and $\xi_a$ are defined as before. Notice that differentiation of (\ref{pedal}) yields
\begin{align*} \gamma_p'(t) = \gamma'(t) + [\gamma'(t),\eta(t)]\xi_a(t) + [\gamma(t)-p,\eta'(t)]\xi_a(t) + [\gamma(t) - p,\eta(t)]\xi'_a(t) = \\ = [\gamma(t)-p,\eta'(t)]\xi_a(t) + [\gamma(t)-p,\eta(t)]\xi_a'(t),
\end{align*}
where the second equality is justified since $[\gamma'(t),\eta(t)]\xi_a(t) = -\gamma'(t)$. From the definition of $\xi_a$ and from the equality (\ref{xideriv}), the above equality may be written as
\begin{align}\label{pedalderiv} \gamma_p' = \frac{\kappa}{[\eta,\xi]}\left(\left([\gamma - p,\xi]+\rho\frac{[\gamma-p,\eta]\cdot[\eta,b(\xi)]}{[\eta,\xi]}\right)\xi
- \rho[\gamma-p,\eta]b(\xi)\right),
\end{align}
where we are omitting the variable $t$ for the sake of having a clearer notation. Also we are denoting $\rho=\rho(t) = \rho(\eta(t))$. Notice that if $t_0$ is a point where $\kappa$ vanishes, then $t_0$ is a singular point of the pedal curve $\gamma_p$. Also, if $p$ is a point of $\gamma$, then it is also a singular point of $\gamma_p$. Finally, the only remaining possibility of $\gamma_p$ having a singular point would be if
\begin{align*} [\gamma(t)-p,\xi(t)] + \rho(t)\frac{[\gamma(t)-p,\eta(t)]\cdot[\eta(t),b(\xi(t))]}{[\eta(t),\xi(t)]} = 0 \ \ \mathrm{and} \\ [\gamma(t) - p,\eta(t)] = 0,
\end{align*}
but if the second equality holds, and $p \notin \gamma(I)$, then the first equality does not hold. Indeed, if $\gamma(t) - p \neq 0$, then either $[\gamma(t)-p,\xi(t)] \neq 0$ or $[\gamma(t)-p,\eta(t)] \neq 0$. If $p$ is a point of $\gamma$, then $\gamma_p$ is not necessarily a frontal (a counterexample is given by Example \ref{examp}, in view of Proposition \ref{propcusp}). However, based on this observation we can prove that if $p \notin \gamma(I)$, then $\gamma_p$ is a frontal.

\begin{teo} Let $(\gamma,\eta):I\rightarrow X\times S$ be a Legendre curve with curvature $(\alpha,\kappa)$. If $p \in X\setminus\gamma(I)$, then the pedal curve $\gamma_p$ is a frontal. Moreover, the singular points of $\gamma_p$ correspond exactly to the points where $\kappa$ vanishes.  
\end{teo}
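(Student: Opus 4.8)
The plan is to exhibit an explicit smooth normal field for $\gamma_p$ by factoring the curvature function $\kappa$ out of the derivative formula (\ref{pedalderiv}), and then to read off the singular points directly from this factorization. Suppressing the variable $t$, I would first rewrite (\ref{pedalderiv}) as $\gamma_p' = \kappa\, w$, where
\[ w = \frac{1}{[\eta,\xi]}\left(\left([\gamma - p,\xi]+\rho\frac{[\gamma-p,\eta]\cdot[\eta,b(\xi)]}{[\eta,\xi]}\right)\xi - \rho[\gamma-p,\eta]\,b(\xi)\right). \]
Since $[\eta,\xi] > 0$ throughout and all the remaining ingredients $\gamma,\eta,\xi,b(\xi),\rho$ are smooth, the vector field $w$ is smooth on all of $I$.

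The crucial step is to verify that $w$ is nowhere vanishing once $p \notin \gamma(I)$; this is exactly the computation already carried out in the paragraph preceding the statement. Because $\{\xi,b(\xi)\}$ is a basis and $\rho > 0$, the equation $w = 0$ forces the $b(\xi)$-coefficient to vanish, i.e. $[\gamma-p,\eta] = 0$, whereupon the surviving $\xi$-coefficient reduces to a multiple of $[\gamma-p,\xi]$ and thus forces $[\gamma-p,\xi] = 0$ as well; as $\{\eta,\xi\}$ is also a basis, these two conditions give $\gamma(t) = p$, contradicting $p\notin\gamma(I)$.

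With $w$ smooth and nonvanishing, I would define the candidate normal field $\mu(t) := b^{-1}\!\left(w(t)/||w(t)||\right)$, using that $b|_S$ is a diffeomorphism (as recorded just before Theorem \ref{teoevo}), so that $b^{-1}$ is smooth and $\mu$ is a smooth map $I\rightarrow S$. By the defining property of $b^{-1}$ one has $\mu(t)\dashv_B w(t)$, and since $\gamma_p'(t) = \kappa(t)w(t)$ is a scalar multiple of $w(t)$ — and Birkhoff orthogonality is unchanged under scaling of the second vector by a nonzero constant — it follows that $\mu(t)\dashv_B\gamma_p'(t)$ at every regular point, while at a zero of $\kappa$ we have $\gamma_p'(t)=0$ and the orthogonality is automatic. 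Thus $(\gamma_p,\mu)$ is a Legendre curve and $\gamma_p$ is a frontal.

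The \emph{moreover} part is then immediate from the same factorization: since $w$ never vanishes, $\gamma_p'(t)=\kappa(t)w(t)=0$ holds precisely when $\kappa(t)=0$, so the singular points of $\gamma_p$ are exactly the zeros of $\kappa$. The only genuinely nontrivial point in the whole argument is the nowhere-vanishing of $w$ — that the two bracket coefficients cannot vanish simultaneously away from $\gamma(I)$ — and this has essentially been secured by the discussion immediately before the theorem, so what remains is a clean assembly.
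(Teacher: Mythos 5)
Your proposal is correct and follows essentially the same route as the paper: factor $\gamma_p'$ from (\ref{pedalderiv}) as $\kappa$ times a smooth nowhere-vanishing vector field, pull that field back through $b^{-1}$ to obtain the normal, and read the singular points off the factorization. The only differences are cosmetic — you absorb the $1/[\eta,\xi]$ factor into the field and normalize before applying $b^{-1}$, where the paper applies $b^{-1}$ to the unnormalized $\zeta$ with an acknowledged abuse of notation — so this is the same proof, written a little more carefully.
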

\begin{proof} The equality (\ref{pedalderiv}) can be written as
\begin{align*} \gamma'_p(t) = \frac{\kappa(t)}{[\eta(t),\xi(t)]}\zeta(t),
\end{align*}
where $\zeta(t)$ is the non-vanishing vector field
\begin{align*} \zeta = \left([\gamma-p,\xi]+\rho\frac{[\gamma-p,\eta]\cdot[\eta,b(\xi)]}{[\eta,\xi]}\right)\xi - \rho[\gamma-p,\eta]b(\xi).
\end{align*}
Here again we omitted, for the sake of simplicity, the parameter. Therefore, abusing of the notation and setting $\nu(t) = b^{-1}(\zeta(t))$, we have that $(\gamma_p,\nu)$ is a Legendre curve (with tangent field given by $\zeta/||\zeta||$). Also, since $\zeta$ does not vanish, it follows that $t \in I$ is a singular point of the pedal curve $\gamma_p$ if and only if $\kappa(t) = 0$.

\end{proof}

\bibliography{bibliography.bib}

\end{document}